\tikzset{anchorbase/.style={baseline={([yshift=-0.5ex]current bounding 
box.center)}}}
\tikzset{wipe/.style={white,line width=4pt}}
\tikzset{->-/.style={decoration={
  markings,
  mark=at position #1 with {\arrow{>}}},postaction={decorate}}}
\tikzset{-<-/.style={decoration={
  markings,
  mark=at position #1 with {\arrow{<}}},postaction={decorate}}}
\tikzset{darkg/.style={green!70!black}}
\tikzset{->-/.style={decoration={
  markings,
  mark=at position #1 with {\arrow{>}}},postaction={decorate}}}
\tikzset{-<-/.style={decoration={
  markings,
  mark=at position #1 with {\arrow{<}}},postaction={decorate}}}
\theoremstyle{plain}
\newtheorem*{theorem*}{Theorem}
\newtheorem*{remark*}{Remark}
\newtheorem*{example*}{Example}
\newtheorem{lemma}{Lemma}[subsection]
\newtheorem{proposition}[lemma]{Proposition}
\newtheorem{corollary}[lemma]{Corollary}
\newtheorem*{conjecture*}{Conjecture}
\theoremstyle{definition}
\newtheorem{definition}[lemma]{Definition}
\newtheorem{example}[lemma]{Example}
\theoremstyle{remark}
\newtheorem{remark}[lemma]{Remark}
\newcommand{\Hom}{\operatorname{Hom}}
\newcommand{\Ind}{\operatorname{Ind}}
\newcommand{\Res}{\operatorname{Res}}
\newcommand{\id}{\operatorname{Id}}
\newcommand{\Aut}{{\operatorname{Aut}}}
\newcommand{\End}{\operatorname{End}}
\newcommand{\C}{{\mathbb C}}
\newcommand{\K}{\mathbb{K}}
\newcommand{\F}{{\mathbb F}}
\newcommand{\abs}[1]{\left|{#1}\right|}
\newcommand{\Rep}{\operatorname{Rep}}
\newcommand{\Vect}[1]{\mathrm{\mathbf{Vect}}_{#1}}
\newcommand{\Sets}{\mathbf{Sets}}
\newcommand{\one}{{\mathbf{1}}}
\newcommand{\rel}[1]{ \langle #1 \rangle}
\newcommand{\comment}[1]{}
\newcommand{\Inna}[1]{\begin{framed} {\tt{\color{blue}{#1}}} \end{framed}}
\def\quotient#1#2{%
    \raise1ex\hbox{$#1$}\Big/\lower1ex\hbox{$#2$}%
}
\begin{document}
\title{Deligne-Knop tensor categories and functoriality}
\date{\today}

  \author{Inna Entova-Aizenbud, Thorsten Heidersdorf}

 \address{I. E.: Department of Mathematics, Ben Gurion University of the Negev, Beer-Sheva, Israel}
 \email{entova@bgu.ac.il}
 \address{T. H.: School of Mathematics, Statistics and Physics, Newcastle University, UK}
 \email{heidersdorf.thorsten@gmail.com}

 \thanks{2010 {\it Mathematics Subject Classification}: 05E05, 18D10, 20C30.}

\begin{abstract} A general construction of Knop creates a symmetric monoidal category $\mathcal{T}(\mathcal{A},\delta)$ from any regular category $\mathcal{A}$ and a fixed degree function $\delta$. A special case of this construction are the Deligne categories $\underline{\Rep}(S_t)$ and $\underline{\Rep}(GL_t(\mathbb{F}_q))$. 
We discuss when a functor $F:\mathcal{A} \to \mathcal{A}'$ between regular categories induces a symmetric monoidal functor $\mathcal{T}(\mathcal{A},\delta) \to \mathcal{T}(\mathcal{A}',\delta')$. We then give a criterion when a pair of adjoint functors between two regular categories $\mathcal{A}, \ \mathcal{A}'$ lifts to a pair of adjoint functors between $\mathcal{T}(\mathcal{A},\delta)$ and $\mathcal{T}(\mathcal{A}',\delta')$.
\end{abstract}

\maketitle

\section{Introduction} 

Many of the known Deligne or interpolating categories arise via a construction of Knop (see \cite{K}) from a regular category $\mathcal{A}$ and a fixed degree function $\delta: Epi(\mathcal{A}) \to \mathbb{K}$ for some fixed ground field $\mathbb{K}$. A typical example is the Deligne category $\underline{\Rep}(S_t)$, $t \in \mathbb{C}$ \cite{Del07} which is obtained from the category $\mathcal{A} = \Sets^{op}$ together with the degree function $\delta (e:Y \to Y') = t^{|Y' \setminus e(Y)|}$ where $e:Y \to Y'$ in $\Sets$ is viewed as an element of $Epi(\Sets^{op})$. Other notable examples are the interpolation categories for finite dimensional complex representations of wreath products or finite linear groups \cite{EAH}.

While it is often easy to see that two given representation categories are related by functors (for example, restriction and induction), the question is more delicate for the Deligne-Knop categories. In Section \ref{sec:functoriality} we adress the question of functoriality. We show in Lemma \ref{lem:knop_constr_functorial} that if $F: \mathcal{A}\to \mathcal{A}'$ is a functor preserving the degree functions and pullbacks on two regular categories $\mathcal{A},\mathcal{A}'$, it extends naturally to a $\K$-linear functor $\overline{F}:\mathcal{T}(\mathcal{A},\delta) \longrightarrow \mathcal{T}(\mathcal{A}',\delta')$ which is symmetric monoidal if $F$ preserves terminal objects.

In order to have enough examples, we study two additional examples of regular categories (not discussed in \cite{K}), the category $G-\Sets^{op}$ for a finite group $G$ and the category $\Rep(G,\F)$, the category of finite dimensional representations of a finite group $G$ over over a finite field $\F$. We discuss the question of functoriality then in a number of examples in Sections \ref{ssec:examples_Knop_constr}, \ref{sec:new-ex}.
\begin{itemize}
\item Functors from \texorpdfstring{$\Vect{\F}$}{vector spaces} to \texorpdfstring{$\Sets^{op}$}.
\item Functors between the categories $G-\Sets_{free}$ and $ \Sets$.
\item Functors $G-\Sets_{free} \to H-\Sets_{free}$ for $H \subset G$.
\item Functors $\Rep(G,\F) \to \Vect{\F}$ for a finite group $G$ and a finite field $\F$.
\end{itemize}

For example, we obtain by functoriality symmetric monoidal functors $\mathcal{T}(\Vect{\F},\delta_t) \cong \underline{\Rep}(GL_t(\F)) \to \mathcal{T}(\Rep(G,\F),\delta_{\tau'_t})$ for certain degree functions $\delta_t$ and $\delta_{\tau'_t}$. The existence of such functors also follows from the universality properties of $\underline{\Rep}(GL_t(\F))$, but in our approach we don't need to know anything about a presentation of the categories or their universal properties. 

In Proposition \ref{prop:adjoint} we give a criterion when a pair $F: \mathcal{A}\rightleftarrows \mathcal{A}':G$, $G \vdash F$ of adjoint functors preserving the degree functions leads to an adjoint pair $\overline{G} \vdash \overline{F}$ between the Deligne-Knop categories. An immediate application is that in this case the induced $\K$-linear symmetric monoidal functor $\overline{F}:\mathcal{T}(\mathcal{A},\delta) \longrightarrow \mathcal{T}(\mathcal{A}',\delta')$ preserves limits.

An example where the criteria are met is given by the pair $F : \mathcal{A}\rightleftarrows \mathcal{A}/x_0:G$ where $\mathcal{A}/x_0$ is the slice category (overcategory) over a fixed object $x_0 \in \mathcal{A}$ and where $F:= (-) \times x_0$ and $G$ is the forgetful functor $(x, p_x)\mapsto x$. In the case $\mathcal{A}:=\Vect{\F_q}$ the category $\mathcal{T}(\Vect{\F_q}/V_0, \delta'_t)$ interpolates representations of a certain maximal parabolic subgroups of $GL_n(\F_q)$ for different values of $n$ as in Remark \ref{remark-max-par}, and the induced functor $\overline{F}: \underline{\Rep}(GL_{t}(\F_q)) \to  \mathcal{T}(\Vect{\F_q}/V_0, \delta'_t)$ preserves finite limits.

The examples in Sections \ref{ssec:examples_Knop_constr}, \ref{sec:new-ex} and \ref{sec:examples_adj_pairs} show, that while there are many adjoint pairs of functors on the level of regular categories, most do not lift to a pair of adjoint functor between the Deligne-Knop categories.

It was proven by Snowden \cite{Snowden} that Knop's categories can all be obtained from the theory of oligomorphic groups \cite{HS}. It would be interesting to study functoriality and adjointness for the monoidal categories arising from oligomorphic groups and their measures. Since the measures can be thought of as the replacement of Knop's degree functions, the restriction that the functors need to preserve the degree functions should also usually prevent the lifting of adjoint pairs to the oligomorphic monoidal categories.

\subsection*{Acknowledgements} The research of I. E. was supported by ISF grant 1362/23.


\section{Preliminaries}
We denote $\mathbb{N}:=\mathbb{Z}_{\geq 0}$.

The base field for this paper will be denoted by $\mathbb{K}$. All the categories and the functors will be $\K$-linear, unless stated otherwise. In many cases, we will also require that $char \, \K=0$.

We will only consider well-powered categories (so that for any object $x$, the its subobjects form a set).

\subsection{Monoidal categories} Let $\K$ be any field. We adopt the same notion of a $\K$-linear symmetric monoidal (SM) category as \cite[Chapters 2 and 8]{EGNO}. In particular we have a binatural family of braiding morphisms $\gamma_{XY}:X\otimes Y\stackrel{\sim}{\to} Y\otimes X$ which satisfy the constraints of \cite[Chapter 8]{EGNO}. 
\medskip
A $\K$-linear functor $F:\mathcal{C}\to \mathcal{C}'$ between two SM categories $\mathcal{C}$ and $\mathcal{C}'$ is called symmetric monoidal ({\it SM} for short) if it is equipped with natural isomorphisms $c_{XY}^F:F(X)\otimes F(Y)\stackrel{\sim}{\to}F(X\otimes Y)$ and $\one\stackrel{\sim}{\to}F(\one)$ satisfying the usual compatibility conditions \cite[Section 8]{EGNO}. 


\section{Knop's construction}
\subsection{Knop-Deligne ``skeletal'' categories}\label{ssec:Knop_category_construct}
We recall a construction of Knop \cite{K}.

The following notion of regularity was introduced in \cite{K} to extend the notion of a complete category. A category $\mathcal{A}$ is called {\it regular} if 
\begin{itemize}
    \item $\mathcal{A}$ has a terminal object $\mathbf{1}_{\mathcal{A}}$, 
    \item all morphisms have images, 
        \item there exists a pullback $x\times_z y$ for every commutative diagram $\xymatrix{&w \ar[r] \ar[d] &y \ar[d] \\ &x\ar[r] &z} $,
    \item for any $x\twoheadrightarrow z$, $y\to z$ there exists a pullback $x\times_z y$,
    \item epimorphisms are preserved under pullbacks. 
\end{itemize}

A  {\it relation} between objects $x$ and $y$ of $\mathcal{A}$ is a subobject $r$ of $x \times y$. Given another relation $s \subset y \times z$, their {\it product} is the relation $r \star s :=  im(r \times_y s \to x\times z) \subset x\times z$, as demonstrated in the diagram below:
    $$\xymatrix{ & & &{r \times_y s} \ar[ld] \ar[rd] \ar@{->>}[d] & & & \\ & &{r} \ar[ld] \ar[rd] &{r \star s} \ar@{-->}[lld] \ar@{-->}[rrd] &{s} \ar[ld] \ar[rd] & \\ &x & &y & &z}$$

The regularity of the category $\mathcal{A}$ guarantees that this product is associative. 

Let $Epi(\mathcal{A})$ be the set of all epimorphisms in $\mathcal{A}$. A {\it degree function} on $\mathcal{A}$ is a map $\delta:Epi(\mathcal{A} ) \to \K$, subject to the following conditions (see \cite[Definition 3.1]{K}):
\begin{itemize}
    \item Multiplicativity: $\delta(e'e) = \delta(e')\delta(e)$ for any two epimorphisms $e,e'\in Epi(\mathcal{A})$ which can be composed; additionally, $\delta(\id_x) = 1$ for all $x\in \mathcal{A}$.
    \item $\delta$ preserves pullbacks: $\delta(e') = \delta(e)$ whenever $e'$ is a pull-back of $e$.
\end{itemize}

Given a regular category $\mathcal{A}$ and a degree function, Knop defines the skeletal category $\mathcal{T}^0(\mathcal{A},\delta)$ as follows \cite[Definition 3.2]{K}:
\begin{itemize}
\item The objects are the objects of $\mathcal{A}$. Following Knop the object $x \in \mathcal{A}$ is denoted by $[x]$ when seen as an object in $\mathcal{T}^0(\mathcal{A},\delta)$.
\item The morphisms $[x] \to [y]$ are the formal $\K$-linear combinations of relations $r \subset x \times y$. We denote the morphism corresponding to $r$ by $\rel{r}:[x] \to [y]$.
\item If $r \subset x \times y$ and $s \subset y \times z$ are two relations, then we set $\rel{s}\circ \rel{r} := \delta(e) \rel{s\star r}$ if $r \times_y s$ exists and $e:r \times_y s \twoheadrightarrow s\star r$, while $\rel{s}\circ \rel{r} :=0$ otherwise.
\end{itemize}

\begin{remark}
A special case of this construction is the {\it category of relations} $Rel(\mathcal{A})$, which has the same objects as $\mathcal{A}$ but the morphism spaces are given by the $\K$-span of all relations between $x, y$. The composition in this category is given by the product of relations, and is the category $\mathcal{T}^0(\mathcal{A},\delta)$ when $\delta\equiv 1$ is the constant function.
\end{remark}

\subsection{Embedding of the original category}\label{ssec:embedding_orig_cat}
The category $\mathcal{T}^0(\mathcal{A},\delta)$ comes equipped with a contravariant isomorphism
    \begin{align*} 
    (-)^{\vee}:  \mathcal{T}^0(\mathcal{A},\delta)&\longrightarrow \mathcal{T}^0(\mathcal{A},\delta)^{op}\\
    [x]&\longmapsto [x]\\
    \left(\rel{r}:[x]\to [y] \right) \;\; &\longmapsto   \;\;   \left( \rel{r}: [y]\to [x] \right) 
       \end{align*}
    and the natural embedding
   \begin{align*}
       \Gamma_{\mathcal{A}}: \mathcal{A}&\longrightarrow \mathcal{T}^0(\mathcal{A},\delta)\\
       x \;&\longmapsto \;[x]\\
         \left(f:x\to y\right)  \;\;&\longmapsto  \;\; \left(\rel{\Gamma f}: [x] \to [y]\right)
   \end{align*}
where $\Gamma f\subseteq x\times y$ denotes the graph of $f:x\to y$.

It is proved in \cite[Theorem 3.1]{K3} that the morphisms in $\mathcal{T}^0(\mathcal{A},\delta)$ are generated by morphisms of the form $\rel{\Gamma f}, \rel{\Gamma f}^{\vee}$ under the relations
\begin{itemize}
    \item $\rel{\Gamma f} \circ \rel{\Gamma g } = \rel{\Gamma {fg}}$, $\rel{\Gamma g}^{\vee} \circ \rel{\Gamma f }^{\vee} = \rel{\Gamma {fg}}^{\vee}  $ for any two composable morphisms $f, g$ in $\mathcal{A}$.
    \item $\rel{\Gamma f}^{\vee}\circ \rel{\Gamma g} = \rel{\Gamma {g'}}\circ \rel{\Gamma {f'}}^{\vee}$, for any Cartesian square $  \xymatrix{&r \ar_{f'}[d]  \ar^{g'}[r] & y \ar^{f}[d] \\&x \ar^{g}[r]  &z 
}$ in $\mathcal{A}$.
    \item $\rel{\Gamma e} \circ \rel{\Gamma e}^{\vee} = \delta(e)\id_{[y]}$ for any $e:x\twoheadrightarrow y$ in $Epi(\mathcal{A})$.
\end{itemize}

\begin{remark}
In fact, the equality $\rel{\Gamma f}^{\vee}\circ \rel{\Gamma g} = \rel{\Gamma {g'}}\circ \rel{\Gamma {f'}}^{\vee}$ rarely holds if $  \xymatrix{&r \ar_{f'}[d]  \ar^{g'}[r] & y \ar^{f}[d] \\&x \ar^{g}[r]  &z 
}$ is not a Cartesian square. Indeed, by the definition of composition in $\mathcal{T}^0(\mathcal{A},\delta)$, the above equality of morphisms holds if and only if the canonical map $r\to x\times_z y$ is an epimorphism and $\delta(r\twoheadrightarrow x\times_z y)=1$. 
\end{remark}
 
\subsection{Karoubi envelope}
The category $\mathcal{T}(\mathcal{A},\delta)$ is defined as the additive Karoubi completion of $\mathcal{T}^0(\mathcal{A},\delta)$. The rule $[x] \otimes [y] := [x \times y]$ induces a symmetric monoidal structure on $\mathcal{T}(\mathcal{A},\delta)$. 

\subsection{Examples (due to F. Knop)}\label{ssec:examples_Knop_constr}

Recall from \cite[Section 3.2]{K} that there are no non-trivial degree functions on the category of finite sets $\Sets$, but there are nice degree functions on the categories $\Sets^{op}$, $\Vect{\F}$ as well as the category $G-\Sets^{op}$ (respectively, $G-\Sets^{op}_{free}$), which is the category of finite sets with an action of a finite group $G$ (respectively, a free action). 

As shown in \cite{K}, these categories are regular, and the following holds:
\begin{enumerate}
\item The category $\Sets^{op}$ is the category of finite boolean algebras. The degree functions on this category are parameterized by $t\in \K$, with $\delta_t(e):=t^{\abs{Y\setminus e(X)}}$ where $e: X\hookrightarrow Y$ in $\Sets$ is viewed as an element of $Epi(\Sets^{op})$. Knop's construction gives rise to the family of Deligne categories $\underline{\Rep}(S_t):=\mathcal{T}(\Sets^{op}, \delta_t)$, $t \in \K$. In the case of $char \, \K =0$, the categories $\underline{\Rep}(S_t)$ are semisimple if and only if $t\notin \mathbb{N}$. This family interpolates the categories of finite-dimensional representations of the symmetric groups.

\item For a fixed finite field $\F$, consider the category $ \Vect{\F}$ of finite-dimensional vectors spaces over $\F$. The degree functions on $ \Vect{\F}$ are also parameterized by $t\in \K$, with 
$\delta_t(e):=t^{\dim \ker e}$ for any $e\in Epi(\Vect{\F})$.

In the case of $char \, \K =0$ and $\F = \F_q$, the categories $\underline{\Rep}(GL_t(\F_q)):=\mathcal{T}(\Vect{\F_q}, \delta_t)$ are semisimple if and only if $t\notin |\F|^\mathbb{N}$. The family of categories $\underline{\Rep}(GL_t(\F_q))$, $t\in \K$  interpolates the categories of finite-dimensional representations of $GL_n(\F_q)$. 

\item The category $G-\Sets_{free}^{op}$ is the category of finite boolean algebras with a locally free $G$-action. 
The degree functions on this category are parameterized by $t\in \K$. We denote by $\delta_t$ the degree function corresponding to $t\in \C$, with $$\delta_t(e) := t^{|Y\setminus e(X)|/G}$$ where $e\in Epi(G-\Sets_{free}^{op})$ is given by an injective map $e:X\to Y$ in $G-\Sets_{free}^{op}$, and the value $|(-)/G|$ denotes the number of $G$-orbits in a given finite set.

We denote $\underline{\Rep}(G \wr S_t):=\mathcal{T}(G-\Sets_{free}^{op}, \delta_t)$. In the case of $char \, \K =0$, the categories $\underline{\Rep}(G \wr S_t)$ are semisimple if and only if $t\notin \mathbb{N}|G|$.
The family of categories $\underline{\Rep}(G \wr S_t)$, $ t\in \K$ interpolates the categories of finite-dimensional representations of the wreath product $G \wr S_n$. For example, if $G = \mathbb{Z}/2\mathbb{Z}$, these categories interpolate the representation
categories of the hyperoctahedral groups, i.e., the Weyl groups of type $BC_n$.

\end{enumerate}
\subsection{New examples}\label{sec:new-ex}
The following two categories have not been discussed in \cite{K}, so we work out the details here. 
\subsubsection{\texorpdfstring{$G-\Sets^{op}$}{G-Sets opposite}}\label{sssec:iterated-wreath_cat_constr} 
Let $G$ be a finite group. Consider the category $G-\Sets$ of all finite $G$-sets; it is regular, and so the opposite category is regular by \cite[Section 2.3]{K}.

Consider the following two posets: 
\begin{itemize}
    \item The poset $Tran(G)$ of non-empty transitive $G$-sets, up to $G$-isomorphisms, with a partial order given by $G$-maps: if $S, S'\in Tran(G)$ and there is a $G$-map $S\to S'$  (such a map is automatically surjective) then $S\geq S'$.
    \item The poset $SGp_G$ of subgroups of $G$, with the partial order given by inclusion.
\end{itemize} 
Then the opposite poset to $Tran(G)$ is the poset $SGp_G/G$ of conjugacy classes of subgroups of $G$ with the order inherited from $SGp_G$. This correspondence is given by
$$H\in SGrp_G \; \longmapsto\; G/H, \;\;\;\; X \in Tran(G) \; \longmapsto\; G_x \text{ for arbitrary } x\in X,$$ where $G_x$ denotes the stabilizer of $x\in X$ (the conjugacy class of $G_x$ does not depend on the choice of $x$).
For $X\in Tran(G)$, we will denote by $G_X:=G_x$ the element of $SGp_G/G$ corresponding to $X$. Clearly, we have an isomorphism of $G$-sets $G/H\cong G/H'$ if and only if $H, H'\in SGrp_G$ are conjugate, and for any $X\in Tran(G)$, we have: $G/G_{X}\cong X$ as $G$-sets.

The degree functions on $G-\Sets^{op}$ are parameterized by the set of functions $Tran(G)\to \K$. Given a function 
$\phi:Tran(G)\to \K$, the corresponding degree function $\delta_\phi $ is given by 
$$\delta_\phi(e):=\prod_{S\in (Y\setminus e(X))/G} \phi(S)$$
where the $G$-equivariant monomorphism $e:X\hookrightarrow Y$ is considered as an epimorphism in $G-\Sets^{op}$ and $S$ is a $G$-orbit in $Y\setminus X$.
\begin{remark}
Note that for $e\in Epi(G-\Sets_{free}^{op})$, we have: $\delta_{\phi}(e)=\delta_{t'}(e)$ where $t':=\phi(\{1_G\})$ and $\delta_{t'}$ is the degree function on $G-\Sets_{free}^{op}$ defined in \cref{sssec:wreath_G_sets_ex}.
\end{remark}

\emph{Semisimplicity of the category $\mathcal{T}(G-\Sets^{op},\delta_{\phi})$:}

Let $\mu$ be the Mobius function on the poset $Tran(G)$.
By \cite[Theorem 8.6]{K}, the category $\mathcal{T}(G-\Sets^{op},\delta_{\phi})$ is semisimple if and only if we have: $$\forall \;  S\in Tran(G), \;\;\; \;\sum_{\substack{S'\in Tran(G)\\ S'\leq S}} \mu(S, S') \phi(S') \notin \mathbb{N}.$$

A necessary (though not sufficient for $G\neq \{1\}$) condition for this to hold is that $\phi(\{\bullet\})\notin \mathbb{N}$. 
\newline

\emph{Interpolation property of the family $\mathcal{T}(G-\Sets^{op},\delta_{\phi})$:} 

We now determine the interpolation property. Following \cite[Theorem 9.8]{K}, the family $\mathcal{T}(G-\Sets^{op},\delta_{\phi})$ interpolates the categories of continuous finite dimensional representations of profinite groups $\Aut(P)$ over $\K$. Here $\Aut(P)$ is the group of automorphisms of a uniform functor $P: G-\Sets^{op} \to \Sets$ in the sense of \cite[Definition 9.2]{K}.

Applied to $ G-\Sets^{op}$, the uniform functors are of the form \[ P_X(A): = \Hom_{G-\Sets} (A,X)  \] for some fixed finite $G$-set $X$. In the terminology of \cite[Section 9.3]{K}, the functor $P_X$ has a unique degree function on $G-\Sets^{op}$ adapted to it: that's the function $\delta_{\phi}$ corresponding to $$\phi: Tran(G) \to \K, \;\; \phi(S)\,:=\, |\{S'\in X/G \, : \, S\geq S'\}| $$
(so $\phi(S)$ is the number of $G$-orbits in $X$ onto which $S$ has a $G$-map). We therefore need to determine the automorphism group of the functor $P_X$ for fixed $X \in G-\Sets^{op}$. This automorphism group agrees with $\Aut_G(X)$, which we will now compute.

Let $X$ be a $G$-set. If $X$ is transitive, and $x \in X$ is arbitrary, \[ \Aut_G(X) \cong N_{G}(G_{x})/G_x \cong N_G(G_X)/G_X\] by \cite[Theorem 11.28]{Lee} (we remind the reader that $G_X$ denotes the $G$-conjugacy class of the stabilizer $G_x$ of an arbitrary element $x\in X$).
Hence the automorphism group of a $G$-set $X$ contains the subgroup \[ \prod_{S \in X/G} N_G(G_S)/G_S \] together with an additional permutation action between isomorphic $G$-orbits. Using the identification between $Tran(G)$ and $SGp_G/G$, we obtain \[ \Aut (X) = \prod_{H \in SGp_G/G} S_{n_H} \wr (N_G(H)/H)\] where the product runs over the conjugacy classes of subgroups of $G$ and $n_H$ is the number of orbits $S\in X/G$ such that $H=G_x$ for some $x\in S$ (in other words, $H$ is an element of the conjugacy class $G_S$). Then \cite[Theorem 9.8]{K} implies the following about the semisimplification of $\mathcal{T}(G-\Sets^{op},\delta_{\phi})$. 

\begin{corollary} 
Let $char \K=0$ and let $n:SGp_G/G \to \mathbb{N}$ be a function. Consider the function $\phi:SGp_G/G \to \mathbb{N}$ given by $$\phi(H):=\sum_{\substack{K\in SGp_G/G,\\ H\leq K}} n(H)$$Identifying $\phi$ with the corresponding function $\phi: Tran(G) \to \mathbb{N}\subset\K$, we obtain: the semisimplification of the symmetric monoidal category $\mathcal{T}(G-\Sets^{op},\delta_{\phi})$ is \[\Rep\left( \prod_{H \in SGp_G/G} S_{n(H)} \wr \,(N_G(H)/H), \; \K\right).\]

\end{corollary}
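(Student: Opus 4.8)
The plan is to combine two inputs that are already in place: Knop's interpolation theorem \cite[Theorem 9.8]{K}, which identifies the semisimplification of $\mathcal{T}(\mathcal{A},\delta)$ (for $\delta$ adapted to a uniform functor $P$ and appropriate non-degeneracy) with the category $\Rep(\Aut(P),\K)$, and the explicit computation of $\Aut(P_X)\cong\Aut_G(X)$ carried out in the paragraphs preceding the statement. So the first step is to read off, from the given formula $\phi(H)=\sum_{K\geq H} n(H)$, what $X$ one should take: namely the $G$-set $X:=\bigsqcup_{H\in SGp_G/G} (G/H)^{\sqcup n(H)}$, a disjoint union in which the orbit type $G/H$ occurs with multiplicity $n(H)$. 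Here I must check that the $\phi$ in the statement is exactly the function $S\mapsto |\{S'\in X/G : S\geq S'\}|$ attached to this $X$ in the discussion of uniform functors. This is the one genuine computation: for a transitive $G$-set $S$ with stabiliser (class) $H$, the orbits $S'\in X/G$ admitting a $G$-map $S\to S'$ are exactly those with stabiliser containing a conjugate of $H$, i.e.\ stabiliser class $K\geq H$, and each such $K$ contributes $n(K)$ orbits — giving $\phi(H)=\sum_{K\geq H}n(K)$. (I note the excerpt writes $n(H)$ rather than $n(K)$ inside the sum; I would treat this as a typo and sum $n(K)$, which is what makes the Möbius inversion and the final answer come out as stated.)

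Next I would invoke the description of $\Aut(P_X)$ already derived: $\Aut(P_X)=\Aut_G(X)=\prod_{H\in SGp_G/G} S_{n_H}\wr (N_G(H)/H)$, where $n_H$ is the multiplicity of the orbit type $G/H$ in $X$. With the choice of $X$ above we have $n_H=n(H)$, so $\Aut(P_X)=\prod_{H} S_{n(H)}\wr(N_G(H)/H)$, which is a finite group. Then \cite[Theorem 9.8]{K} gives that the semisimplification of $\mathcal{T}(G-\Sets^{op},\delta_\phi)$ is $\Rep(\prod_{H} S_{n(H)}\wr(N_G(H)/H),\K)$, which is precisely the asserted answer. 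Since $\char\K=0$ and the group is finite, $\Rep$ of it is already semisimple, so no further idempotent-completion subtlety arises.

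One point that needs a word of care is the hypothesis of \cite[Theorem 9.8]{K}: applying it requires that $\delta_\phi$ be the (unique) degree function adapted to the uniform functor $P_X$, which is exactly the content recalled in the paragraph defining $\phi$ via $P_X$, so the hypothesis is met by construction. I do not need the semisimplicity criterion via the Möbius function here — the interpolation theorem already hands over the semisimplification regardless of whether $\mathcal{T}$ itself is semisimple — but it is worth remarking that the two are consistent: plugging this $\phi$ into $\sum_{S'\leq S}\mu(S,S')\phi(S')$ recovers $n(G_S)\in\mathbb{N}$ by Möbius inversion on $Tran(G)\cong (SGp_G/G)^{op}$, so $\mathcal{T}(G-\Sets^{op},\delta_\phi)$ is semisimple precisely when all $n(H)=0$, i.e.\ in the trivial case, as one expects.

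The main obstacle, such as it is, is bookkeeping rather than mathematics: correctly matching the combinatorial data — the direction of the order on $Tran(G)$ versus $SGp_G/G$, the meaning of ``$S\geq S'$'' in terms of stabiliser containment, and the multiplicities $n_H$ versus $n(H)$ — so that the formula for $\phi$ in the statement lines up exactly with the $G$-set $X$ feeding Knop's theorem. Once that dictionary is pinned down, the proof is a two-line citation of \cite[Theorem 9.8]{K} together with the already-computed $\Aut_G(X)$.
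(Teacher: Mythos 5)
Your proof follows exactly the paper's (implicit) argument: the corollary is presented as a direct consequence of the preceding identification of the adapted function $\phi(S)=|\{S'\in X/G\,:\,S\geq S'\}|$ and the computation $\Aut(P_X)=\Aut_G(X)=\prod_{H} S_{n_H}\wr (N_G(H)/H)$ fed into \cite[Theorem 9.8]{K}, and your choice of $X=\bigsqcup_H (G/H)^{\sqcup n(H)}$ together with reading the summand as $n(K)$ (correcting the apparent typo) is precisely the intended bookkeeping. The only slip is in your closing aside: since $0\in\mathbb{N}$, the M\"obius criterion shows $\mathcal{T}(G-\Sets^{op},\delta_{\phi})$ is never semisimple for such integer-valued $\phi$, not merely when some $n(H)\neq 0$ --- but as you note, this remark plays no role in the proof.
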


Hence the family of categories $\mathcal{T}(G-\Sets^{op},\delta_{\phi})$ interpolates the categories of representations of wreath products $\prod_{H \in SGp_G/G} S_{n(H)} \wr (N_G(H)/H)$.

 \subsubsection{Representations of a finite group over a finite field}\label{sssec:rep_finite_group}
Let $\F$ be a finite field.
Let $G$ be a finite group and consider $\Rep(G,\F)$, the category of finite dimensional representations of $G$ over $\F$. Then $\Rep(G,\F)$ is a regular category.

Since $\Rep(G,\F)$ is an abelian category, its degree functions can be determined from \cite[Example 4, Section 3]{K}. Let $Irr(G)$ denote the set of irreducible representations of $G$. For $V \in \Rep(G,\F)$ and $I \in Irr(G)$ denote by $[V:I]$ the multiplicity of $I$ in $V$. The degree functions on $ \Rep(G,\F)$ are in bijection with functions $\tau: Irr(G) \to \mathbb{K}$. The degree function corresponding to $\tau$ is 
\[ \delta_{\tau}(e:V \twoheadrightarrow W) = \prod_{I \in Irr(G)} \tau(I)^{[\ker(e):I]} .\] 

Given $\tau: Irr(G) \to \mathbb{K}$, we denote $t_{\tau}:=\prod_{I\in Irr(G)} \tau(I)^{\dim I}$, a notation we will use later on.

\begin{example} 
Let $t\in \K$.
\begin{enumerate}
    \item Define the function $\tau_t:Irr(G) \to \K$, $\tau_t(I):=t^{\dim I}$ as before. Then $t_{\tau_t}=t^{|G|}$.
    \item We may define a function $\tau'_t:Irr(G) \to \K$ by letting $$\tau'_t(I)=\begin{cases}
    1 &\text{ if } I\neq \one\\
    t &\text{ if } I=\one
\end{cases}.$$ Then $t_{\tau'_t}=t$. Clearly, $\tau'_t=\tau_t$ if and only if $G=\{1\}$.
\end{enumerate}
\end{example}

From now on, assume that $char \,\F\nmid |G| $.

\emph{Semisimplicity of the category $\mathcal{T}(\Rep(G,\F),\delta_{\tau})$:}

Using \cite[Section 8, Example 5]{K} we see that $\mathcal{T}(\Rep(G,\F),\delta_{\tau})$ is semisimple if and only if $\tau(I) \notin |\End_G(I)|^{\mathbb{N}}$ for all $I  \in Irr(G)$. Now, the division algebra $\F_I:=\End_G(I)$ is finite dimensional over $\F$ and therefore finite. Since finite division algebras are fields, $\F_I$ is in fact a field extension of $\F$ and hence $|\F_I| = |\F|^n$ for some $n\in \mathbb{N}$. We conclude that $\mathcal{T}(\Rep(G,\F),\delta_{\tau})$ is semisimple if and only if $\tau(I)\notin |\F_I|^{\mathbb{N}}$ for all $I\in Irr(G)$.

In particular, $\mathcal{T}(\Rep(G,\F),\delta_{\tau_t})$ is semisimple whenever $t\notin \mathbb{N}$, while $\mathcal{T}(\Rep(G,\F),\delta_{\tau'_t})$ is not semisimple for any values of $t$ (since $0 \in \mathbb{N}$). 
\newline

\emph{Interpolation property of the family $\mathcal{T}(\Rep(G,\F),\delta_{\tau})$:}

As in \cite[Theorem 9.8]{K}, the family $\mathcal{T}(\Rep(G,\F),\delta_{\tau})$ interpolates the categories of continuous finite dimensional representations of profinite groups $\Aut(P)$ over $\K$. Again, $\Aut(P)$ is the group of automorphisms of a uniform functor $P: \Rep(G,\F) \to \Sets$ which is compatible with a degree function $\delta_{\tau}$, in the sense of \cite[Definition 9.2]{K}.

The uniform functors $P: \Rep(G,\F) \to \Sets$ are of the form $P = P_V = \Hom_{G}(V,-)$ for $V \in \Rep(G,\F)$. The uniform functor $P$ is compatible with the degree function $\delta_{\tau}$, where $\tau:Irr(G)\to \K$ is given by $\tau(I):=|\F_I|^{[V:I]}$.

Now, the automorphism group of the functor $P$ is just the group of automorphisms of the $G$-representation $V$: $$\Aut(P) = \Aut_G(V) \cong \prod_{I\in Irr(G)} GL_{[V:I]}(\F_I)$$ where $\F_I:=\End_G(I)$ is a finite field extension of $\F$, as before, and $GL_{[V:I]}(\F_I)$ is the general linear group of rank $[V:I]$ over this field.

\begin{remark}
Without the assumption that $char \,\F\nmid |G| $, one would need to restrict ourselves only to injective $G$-modules $V$.
\end{remark}

\begin{corollary}
Let $char \K=0$ and let $n:Irr(G) \to \mathbb{N}$ be a function. Define a function $\tau:Irr(G)\to \mathbb{N}\subset \K$ by $\tau(I):=|\F_I|^{n(I)}$. Then the semisimplification of the symmetric monoidal category $\mathcal{T}(\Rep(G,\F),\delta_{\tau})$ is \[\Rep\left( \prod_{I\in Irr(G)} GL_{n(I)}(\F_I), \; \K\right).\]

\end{corollary}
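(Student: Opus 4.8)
The plan is to read this off from Knop's interpolation theorem \cite[Theorem 9.8]{K}, in exactly the same way as the corollary for $G-\Sets^{op}$ above was deduced, using the classification of uniform functors on $\Rep(G,\F)$ and the computation of their automorphism groups carried out in the paragraphs just preceding the statement. All that is really needed is to realize the prescribed multiplicity function $n$ as the multiplicity vector of a genuine object of $\Rep(G,\F)$.

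So I would first put $V:=\bigoplus_{I\in Irr(G)} I^{\oplus n(I)}$, an object of $\Rep(G,\F)$ with $[V:I]=n(I)$ for all $I\in Irr(G)$; since $char\,\F\nmid |G|$ the category $\Rep(G,\F)$ is semisimple, so $V$ is determined up to isomorphism by these multiplicities and is automatically injective. By the discussion above, the corresponding uniform functor $P_V=\Hom_G(V,-):\Rep(G,\F)\to\Sets$ has adapted degree function $\delta_{\tau}$ with $\tau(I)=|\F_I|^{[V:I]}=|\F_I|^{n(I)}$, which is precisely the degree function appearing in the statement. Moreover, because $n(I)\in\mathbb N$, the values $\tau(I)=|\F_I|^{n(I)}$ all lie in $\mathbb N\subseteq\K$, so $\delta_{\tau}$ is of the integral type to which \cite[Theorem 9.8]{K} applies.

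Next I would invoke \cite[Theorem 9.8]{K} together with $char\,\K=0$: the semisimplification of $\mathcal{T}(\Rep(G,\F),\delta_{\tau})$ is equivalent, as a symmetric monoidal category, to $\Rep(\Aut(P_V),\K)$. By the computation recorded above, $\Aut(P_V)=\Aut_G(V)\cong\prod_{I\in Irr(G)}GL_{[V:I]}(\F_I)=\prod_{I\in Irr(G)}GL_{n(I)}(\F_I)$. Since $Irr(G)$ is finite and each $\F_I$ and each $n(I)$ is finite, this is a finite group, so ``continuous finite-dimensional representations of the profinite group $\Aut(P_V)$'' simply means finite-dimensional $\K$-representations of this finite group, yielding the asserted identification.

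I do not anticipate a genuine obstacle: the hard part --- determining the uniform functors $P_V$, pinning down the unique degree function adapted to each, and identifying $\Aut_G(V)$ with $\prod_I GL_{[V:I]}(\F_I)$ --- has already been carried out in the discussion above, and the remainder is bookkeeping. The only points that deserve explicit mention are that $\tau$ is indeed valued in $\mathbb N$ (immediate from $n(I)\in\mathbb N$), so that \cite[Theorem 9.8]{K} is applicable, and that the profinite group produced by that theorem is in fact finite here, so no continuity subtleties arise.
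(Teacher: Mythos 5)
Your proposal is correct and follows essentially the same route as the paper: the corollary is deduced directly from \cite[Theorem 9.8]{K} together with the preceding classification of uniform functors $P_V=\Hom_G(V,-)$, their adapted degree functions $\tau(I)=|\F_I|^{[V:I]}$, and the identification $\Aut(P_V)\cong\prod_{I}GL_{[V:I]}(\F_I)$, with $V$ chosen so that $[V:I]=n(I)$. Your added remarks on injectivity of $V$ and finiteness of the resulting group are harmless bookkeeping that the paper leaves implicit.
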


Hence the family of categories $\mathcal{T}(\Rep(G,\F),\delta_{\tau})$ interpolates the categories of representations of products $\prod_{I\in Irr(G)} GL_{n(I)}(\F_I)$.

\section{Functoriality of Knop's construction}\label{sec:functoriality}
\subsection{Functoriality}
\begin{definition}\label{def:functor_preserving_degree}
   Let $\mathcal{A},\mathcal{A}'$ be two regular categories and let $\delta, \delta'$ be degree functions on $\mathcal{A},\mathcal{A}'$ respectively. Let $F: \mathcal{A}\to \mathcal{A}'$ be a functor. We will say that $F$ {\it preserves the degree functions} if $F$ preserves epimorphisms and $\delta'(F(e))=\delta(e)$ for every epimorphism $e$ in $\mathcal{A}$.
\end{definition}

\begin{lemma}\label{lem:knop_constr_functorial}
   Let $\mathcal{A},\mathcal{A}'$ be two regular categories and let $F: \mathcal{A}\to \mathcal{A}'$ be a functor preserving the degree functions and pullbacks. Then the following statements hold:

\begin{enumerate}
    \item There exists a functor
$\overline{F}:\mathcal{T}^0(\mathcal{A},\delta) \longrightarrow \mathcal{T}^0(\mathcal{A}',\delta')$ defined as follows:
\begin{itemize}
    \item On objects: for $x\in \mathcal{A}$, we put $\overline{F}([x]):=[F(x)]$.
        \item On morphisms: for $x,y\in \mathcal{A}$ and $r\subset x\times y$, we define $\overline{F}(\rel{r}):=\rel{F(r)}$
        where $F(r)\subset F(x\times y)$ is viewed as a subobject of $F(x)\times F(y)$ via the monomorphism $F(x\times y)\to  F(x)\times F(y)$.
\end{itemize}
\item We have a natural isomorphism $\Gamma_{\mathcal{A}'} \circ F \cong \overline{F}\circ \Gamma_{\mathcal{A}} $.

\item Assume $F$ preserves the terminal objects (that is, if $F(\mathbf{1}')=F(\mathbf{1})$; equivalently, if $F$ preserves all finite limits which exist in $\mathcal{A}$). Then the functor $\overline{F}$ is symmetric monoidal.
\end{enumerate}
\end{lemma}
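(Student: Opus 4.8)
The plan is to prove the three assertions in order, since each builds on the previous. For part (1), I would first check that $\overline{F}$ is well-defined on morphisms: since $F$ preserves pullbacks, it preserves monomorphisms (a mono is a morphism whose diagonal is an isomorphism, a property expressible via pullbacks), so a subobject $r \hookrightarrow x\times y$ is sent to a subobject $F(r)\hookrightarrow F(x\times y)$, and composing with the canonical mono $F(x\times y)\to F(x)\times F(y)$ gives a genuine relation between $F(x)$ and $F(y)$; one also checks this is independent of the representative of the subobject. The substantive point is compatibility with composition. Given $r\subset x\times y$ and $s\subset y\times z$, I would compare $\overline{F}(\rel{s}\circ\rel{r})$ with $\rel{F(s)}\circ\rel{F(r)}$. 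The key input is that $F$ preserves the pullback $r\times_y s$ (when it exists), preserves the image factorization $r\times_y s \twoheadrightarrow s\star r$ (images are built from pullbacks and epis, which $F$ preserves by hypothesis), and that $\delta'(F(e)) = \delta(e)$ for the relevant epimorphism $e: r\times_y s \twoheadrightarrow s\star r$. I would also need that $F(r)\times_{F(y)} F(s)$ agrees with $F(r\times_y s)$ as a subobject of $F(x)\times F(z)$; this follows from $F$ preserving the relevant pullbacks together with the compatibility of the canonical monos. The case where $r\times_y s$ does not exist needs a brief remark: by regularity the only way it fails to exist is a non-epi/non-surjectivity issue, and one argues both sides are zero, or reduces to the generator-relation presentation.

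Part (2) is then essentially formal: for $f:x\to y$ in $\mathcal{A}$, the graph $\Gamma f\subset x\times y$ is the image of $(\id,f):x\to x\times y$, and since $F$ preserves pullbacks hence images and monos, $F(\Gamma f)$ is the graph of $F(f):F(x)\to F(y)$ inside $F(x)\times F(y)$ (after identifying $F(x\times y)$ with a subobject of $F(x)\times F(y)$; when $F$ preserves terminal objects this identification is an isomorphism, but even without that, the graph lands correctly). Hence $\overline{F}(\rel{\Gamma f}) = \rel{F(\Gamma f)} = \rel{\Gamma(F(f))} = \Gamma_{\mathcal{A}'}(F(f))$, giving the claimed natural isomorphism. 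Strictly, I would track the canonical comparison morphisms to see this is natural in $f$, but no genuine difficulty arises.

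For part (3), the extension of $\overline{F}$ to the Karoubi envelopes $\mathcal{T}(\mathcal{A},\delta)\to\mathcal{T}(\mathcal{A}',\delta')$ is automatic (additive Karoubi completion is functorial), so it remains to produce the symmetric monoidal structure. The tensor product on $\mathcal{T}(\mathcal{A},\delta)$ is $[x]\otimes[y] = [x\times y]$, and the coherence isomorphisms $c^{\overline F}_{[x],[y]}: \overline{F}([x])\otimes\overline{F}([y]) = [F(x)\times F(y)] \to [F(x\times y)] = \overline{F}([x]\otimes[y])$ should be taken to be $\rel{\Gamma \iota}$ for the canonical mono $\iota: F(x\times y)\to F(x)\times F(y)$, read in the appropriate direction. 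Since $F$ preserves terminal objects, $\iota$ is an isomorphism, so $\rel{\Gamma\iota}$ is invertible in $\mathcal{T}^0$, and likewise $F(\mathbf 1_{\mathcal A}) = \mathbf 1_{\mathcal A'}$ gives the unit isomorphism $\one = [F(\mathbf 1_{\mathcal A})] \to \one$. I would then verify the three coherence diagrams — associativity, left/right unitality, and compatibility with the braiding $\gamma$. Each reduces, via part (1)'s compatibility with composition and part (2), to a diagram of canonical comparison morphisms among iterated products $F(x\times y\times z)$, $F(x\times y)\times F(z)$, etc., which commute because they are built from the universal property of products and $F$'s preservation of them; the braiding compatibility uses that the braiding on $\mathcal{T}(\mathcal{A},\delta)$ is $\rel{\Gamma(\text{swap})}$ and $F$ commutes with the swap map.

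The main obstacle I anticipate is the composition-compatibility step in part (1): one must carefully match $F$ applied to the "triple pullback / image" diagram defining $r\star s$ against the analogous diagram for $F(r)$ and $F(s)$, keeping track of the canonical monomorphisms $F(a\times b)\to F(a)\times F(b)$ throughout and checking that the degree factor $\delta'$ of the image-epimorphism equals $\delta$ of the original — this is exactly where the hypotheses "preserves pullbacks", "preserves epimorphisms", and "preserves degree functions" are all used simultaneously. Once this bookkeeping is done cleanly, parts (2) and (3) are comparatively routine diagram-chases.
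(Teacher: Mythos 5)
Your proposal follows essentially the same route as the paper's proof: part (1) reduces to showing $F(r\star s)=F(r)\star F(s)$ using that preservation of pullbacks and epimorphisms implies preservation of monos and images, with the degree hypothesis handling the scalar; part (2) is the observation $F(\Gamma f)=\Gamma(F(f))$; and part (3) is the coherence check via the canonical comparison isomorphisms $F(x\times y)\to F(x)\times F(y)$. You are in fact somewhat more careful than the paper on two points it leaves implicit (the zero case when $r\times_y s$ fails to exist, and the explicit monoidal structure maps), so the plan is sound.
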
 
The functor $\overline{F}$ extends naturally to a $\K$-linear functor $\overline{F}:\mathcal{T}(\mathcal{A},\delta) \longrightarrow \mathcal{T}(\mathcal{A}',\delta')$ which is symmetric monoidal if $F$ preserves terminal objects.

\begin{remark}
The fact that a functor $F$ preserves pullbacks doesn't imply that it preserves products: $F$ doesn't necessarily take the terminal object in $\mathcal{A}$ to the terminal object in $\mathcal{A}'$ (see for example the functor $G$ described in \cref{ssec:slice_cat}). However, for any $x, y\in  \mathcal{A}$, we have: the natural map $F(x\times y)  \to F(x) \times F(y)$ is a monomorphism.

Indeed, for any two arrows $x' \to z', y'\to z'$  in $\mathcal{A}'$, the natural map $x'\times_{z'} y'\to x'\times y'$ is always a monomorphism. If the functor $F$ preserves pullbacks, then for any object $x, y\in  \mathcal{A}$, the map $$F(x\times y) = F(x\times_{\mathbf{1}'} y) = F(x)\times_{F(\mathbf{1})} F(y) \to F(x) \times F(y)$$ is a monomorphism.
\end{remark}

\begin{proof}[Proof of \cref{lem:knop_constr_functorial}]
    For the first statement we need to check that given $r\subset x\times y$, $s\subset y\times z$ in $\mathcal{A}$, we will have: $F(r \star s)=F(r)\star F(s)$. Recall that $F$ preserves epimorphisms and pullbacks (in particular, it preserves monomorphisms). So $F$ preserves images, too. Thus we have: \begin{align*}
        F(r \star s)&=  F(im(r \times_y s \to x\times z)) = im(F(r \times_y s \to x\times z)) =\\ &im(F(r) \times_{F(y)} F(s) \to F(x)\times F(z)))= F(r)\star F(s) 
    \end{align*} where the third equality is due to the fact that $F$ preserves pullbacks and epimorphisms. 

    The second statement of the lemma follows directly from the fact that $F$ preserves pullbacks and thus $\Gamma {F(f)}=F(\Gamma f)$ for any $f:x\to y$ in $\mathcal{A}$.

    The third statement is straightforward.
\end{proof}

\subsection{Examples} \label{ssec:examples_functoriality}

\subsubsection{Functor from \texorpdfstring{$\Vect{\F}$}{vector spaces} to \texorpdfstring{$\Sets^{op}$}{Sets}}\label{sssec:forgetful_functor_ex}
Let $\F$ be a finite field.

Consider the functor $\F[-]:  \Sets^{op} \to \Vect{\F}$ sending a finite set $X$ to the space of $\F$-valued functions
on $X$. Its left adjoint is the functor $(-)^*\circ Forget: \Vect{\F} \to \Sets^{op}$, sending a vector space $V$ to the underlying space of its dual $V^*$.

Clearly, $(-)^*\circ Forget$ doesn't preserve pullbacks, while the functor $\F[-]$ preserves pullbacks and terminal objects. 

Let $t\in \K$. We denote by $\delta_t$, $\delta'_t$ the corresponding degree functions on the categories $\Sets^{op}$, $ \Vect{\F}$ respectively.
The following is a direct corollary of \cref{lem:knop_constr_functorial}: 
\begin{lemma} The functor $\F[-]$ preserves the degree functions, the pullbacks and the terminal objects. In particular, we obtain a symmetric monoidal functor $$\overline{\F[-]}: \mathcal{T}(\Sets^{op},\delta_t) \to \mathcal{T}(\Vect{\F},\delta'_t)$$ or, in other words, a symmetric monoidal functor $\overline{\F[-]}: \underline{\Rep}(S_t) \to \underline{\Rep}(GL_t(\F))$.
\end{lemma}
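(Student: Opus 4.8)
The plan is to verify the three hypotheses of \cref{lem:knop_constr_functorial} for the functor $\F[-]:\Sets^{op}\to\Vect{\F}$ and then simply invoke that lemma. Concretely, I need to check: (i) $\F[-]$ sends epimorphisms of $\Sets^{op}$ to epimorphisms of $\Vect{\F}$ and matches the degree functions, i.e.\ $\delta'_t(\F[e])=\delta_t(e)$ for every $e\in Epi(\Sets^{op})$; (ii) $\F[-]$ preserves pullbacks; (iii) $\F[-]$ preserves the terminal object.

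First I would unwind what $\F[-]$ does on morphisms. A morphism $X\to Y$ in $\Sets^{op}$ is a set map $f:Y\to X$, and $\F[f]:\F[X]\to\F[Y]$ is precomposition, $\varphi\mapsto\varphi\circ f$. An epimorphism $e:X\hookrightarrow Y$ in $Epi(\Sets^{op})$ corresponds to an injection $e:X\hookrightarrow Y$ of sets (here I am using the paper's convention from \cref{ssec:examples_Knop_constr}(1)); restriction along an injection $\F[Y]\to\F[X]$ is surjective, so $\F[e]$ is indeed an epimorphism in $\Vect{\F}$. Its kernel consists of functions supported on $Y\setminus e(X)$, a space of dimension $\abs{Y\setminus e(X)}$, so $\delta'_t(\F[e])=t^{\dim\ker\F[e]}=t^{\abs{Y\setminus e(X)}}=\delta_t(e)$. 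This settles (i).

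For (ii), a pullback in $\Sets^{op}$ is a pushout in $\Sets$, so I must show $\F[-]$ turns pushouts of sets into pullbacks of vector spaces. Given a pushout square of sets with maps out of a common set, the pushout $P$ can be described concretely (a quotient of the disjoint union), and $\F[P]$ is the space of $\F$-valued functions on $P$; I would check directly that a function on $P$ is the same datum as a compatible pair of functions on the two sets agreeing after pullback to the common set — that is exactly the universal property identifying $\F[P]$ with the relevant fibre product. Equivalently, $\F[-]$ is a right adjoint (to $(-)^*\circ Forget$, as noted in the text), hence preserves all limits of $\Sets^{op}$, and in particular pullbacks; I would state it this way to keep it short. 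For (iii), the terminal object of $\Sets^{op}$ is the empty set $\emptyset$ (terminal in $\Sets^{op}$ = initial in $\Sets$), and $\F[\emptyset]=0$ is the terminal (zero) object of $\Vect{\F}$, so $\F[-]$ preserves terminal objects.

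With (i), (ii), (iii) in hand, \cref{lem:knop_constr_functorial} immediately yields a $\K$-linear functor $\overline{\F[-]}:\mathcal{T}(\Sets^{op},\delta_t)\to\mathcal{T}(\Vect{\F},\delta'_t)$ which, because terminal objects are preserved, is symmetric monoidal; under the identifications $\mathcal{T}(\Sets^{op},\delta_t)\cong\underline{\Rep}(S_t)$ and $\mathcal{T}(\Vect{\F},\delta'_t)\cong\underline{\Rep}(GL_t(\F))$ recalled in \cref{ssec:examples_Knop_constr}, this is the asserted functor $\underline{\Rep}(S_t)\to\underline{\Rep}(GL_t(\F))$. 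I expect the only genuine (though still routine) point of friction to be the pushout-to-pullback verification in (ii): one has to be slightly careful that $\F[-]$ is being applied to a bona fide pullback diagram in $\Sets^{op}$, but invoking the adjunction sidesteps any hands-on computation. The degree-function bookkeeping in (i) and the terminal-object check in (iii) are essentially immediate once the conventions are spelled out.
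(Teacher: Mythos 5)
Your proposal is correct and follows exactly the route the paper takes: the paper gives no separate proof, simply observing that $\F[-]$ preserves pullbacks, terminal objects and the degree functions and citing \cref{lem:knop_constr_functorial}, and your verifications of (i)--(iii) (surjectivity and kernel of the restriction map for the degree-function match, limit-preservation via the adjunction with $(-)^*\circ Forget$, and $\F[\emptyset]=0$) are precisely the routine checks being left implicit. No gaps.
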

This functor is described by the universal property of $\underline{\Rep}(S_t)$ (see \cite{Del07}), stating that each such symmetric monoidal functor out of $\underline{\Rep}(S_t)$ corresponds to a special commutative Frobenius object in the target category. In this case, the special commutative Frobenius object is given by the generating object of $\underline{\Rep}(GL_t(\F))$: the tautological ``matrix representation'' of $GL_t(\F)$ (see \cite{EAH} for a description of this object as a $\mathbb{F}_q$-linear Frobenius space).

\subsubsection{Wreath products}\label{sssec:wreath_G_sets_ex}
Let $G$ be a finite group. There is a forgetful functor $\Res: G-\Sets_{free} \to \Sets$ which forgets the $G$-action. This functor is representable: it is represented by the $G$-set $G$ on which the group acts by left multiplication. We obtain a pair of adjoint functors $(G\times -)\vdash \Res$, where $$ G\times -: \Sets \to G-\Sets_{free}, \;\;\text{ and }\;\;\Res: G-\Sets_{free} \to \Sets.$$
\begin{remark}
In fact, this is part of the triple of adjoint functors $(G\times -)\vdash \Res \vdash Fun(G, -)$ between the categories $G-\Sets$ and $ \Sets$. They define a triple of adjoint functors $$Fun(G, -)\vdash \Res \vdash (G\times -)$$ between $G-\Sets^{op}$ and $\Sets^{op}$. Hence the functors $\Res\,:\, G-\Sets^{op} \leftrightarrows \Sets^{op}\,:\, (G\times -)$ preserve pullbacks.
\end{remark}
\comment{ 



}

The functor $G\times -$ yields a functor $\Ind: \Sets^{op} \to G-\Sets^{op}_{free}$. Choose $t\in \K$ and consider the corresponding degree functions $\delta_t, \delta'_t$ on $\Sets^{op}, G-\Sets^{op}_{free}$, respectively (see \cref{ssec:examples_Knop_constr}). The functor $\Ind$ preserves the degree functions $\delta_t$ and $\delta'_t$, as well as the terminal objects (the empty set) and the pullbacks. 

The forgetful functor $\Res:G-\Sets^{op}_{free} \to \Sets^{op}$ is the left adjoint of $\Ind$ and preserves pullbacks and terminal objects, yet it doesn't preserve the degree functions $\delta_t, \delta'_t$. Indeed, for $e: Y\hookrightarrow Y'$ a monomorphism in $G-\Sets_{free}$, seen as an element of $Epi(G-\Sets_{free}^{op})$, we have
$$\delta_t(\Res(e))=t^{|Y'\setminus e(Y)|}\neq t^{|(Y'\setminus e(Y))/G|}=\delta'_t(e).$$ However, this can be remedied by considering the degree functions $\delta_t$ on $\Sets^{op} $ and $\delta'_{t^{|G|}}$ on $G-\Sets^{op}_{free}$.

The following lemma is an immediate corollary of \cref{lem:knop_constr_functorial}.

\begin{lemma} \label{lem:wreath} 
\mbox{}
\begin{enumerate}
    \item 
The functor $\Ind$ preserves the degree functions $\delta_t, \delta'_t$, the pullbacks and the terminal objects.  In particular, we obtain a symmetric monoidal functor $$\overline{\Ind}: \mathcal{T}(\Sets^{op},\delta_t) \to \mathcal{T}(G-\Sets^{op}_{free},\delta'_t)$$ or, in other words, a symmetric monoidal functor $\overline{\Ind}: \underline{\Rep}(S_t) \to \underline{\Rep}(G \wr S_t)$.
\item The functor $\Res$ preserves the degree functions $\delta_t$ and $\delta'_{t^{|G|}}$, the pullbacks and the terminal objects.  In particular, we obtain a symmetric monoidal functor $$\overline{\Res}: \mathcal{T}(G-\Sets^{op}_{free},\delta'_{t^{|G|}})\to \mathcal{T}(\Sets^{op},\delta_t) $$ or, in other words, a symmetric monoidal functor $\overline{\Res}: \underline{\Rep}(G \wr S_{t^{|G|}}) \to \underline{\Rep}(S_t)$.
\end{enumerate}
\end{lemma}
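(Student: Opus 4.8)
The plan is to read off both statements from \cref{lem:knop_constr_functorial}: it suffices to check that $\Ind$ (with the degree functions $\delta_t$ on $\Sets^{op}$ and $\delta'_t$ on $G-\Sets^{op}_{free}$) and $\Res$ (with $\delta_t$ on $\Sets^{op}$ and $\delta'_{t^{|G|}}$ on $G-\Sets^{op}_{free}$) each preserve epimorphisms, the degree functions, pullbacks, and terminal objects. Granting this, \cref{lem:knop_constr_functorial} and the remark following it produce the symmetric monoidal functors $\overline{\Ind}$ and $\overline{\Res}$ on the Karoubi completions, and the two ``in other words'' reformulations are simply the definitions $\underline{\Rep}(S_t)=\mathcal{T}(\Sets^{op},\delta_t)$ and $\underline{\Rep}(G\wr S_s)=\mathcal{T}(G-\Sets^{op}_{free},\delta_s)$ from \cref{ssec:examples_Knop_constr}, applied with $s=t$ and $s=t^{|G|}$.

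\textbf{Pullbacks and terminal objects.} By the Remark above, $\Res$ and $G\times -$ belong to the adjoint triple $Fun(G,-)\vdash \Res\vdash (G\times -)$ between $G-\Sets^{op}$ and $\Sets^{op}$; having adjoints on both sides, each preserves all limits, in particular pullbacks. Since $G\times X$ is automatically free and $G-\Sets^{op}_{free}$ is regular with its relevant pullbacks computed as in $G-\Sets^{op}$ (as recalled in \cref{ssec:examples_Knop_constr}), this preservation passes to the functors $\Ind$ and $\Res$ between $\Sets^{op}$ and $G-\Sets^{op}_{free}$. The terminal object of each of these categories is the empty set, and $G\times\emptyset=\emptyset=\Res(\emptyset)$, so both functors preserve terminal objects; equivalently, they preserve all finite limits.

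\textbf{Epimorphisms and degree functions.} An epimorphism in an opposite category $\mathcal{C}^{op}$ is a monomorphism in $\mathcal{C}$, i.e.\ here an injective map (resp.\ an injective $G$-map); as $G\times -$ sends an injective map $e\colon X\hookrightarrow Y$ to $\id_G\times e$ and $\Res$ sends an injective $G$-map to its underlying injective map, both $\Ind$ and $\Res$ preserve epimorphisms. For the degrees, first let $e\colon X\hookrightarrow Y$ be a monomorphism in $\Sets$, seen as an epimorphism in $\Sets^{op}$; then $\Ind(e)=\id_G\times e\colon G\times X\hookrightarrow G\times Y$ has complement $G\times(Y\setminus e(X))$, on which $G$ acts freely with exactly $|Y\setminus e(X)|$ orbits, so $\delta'_t(\Ind(e))=t^{|Y\setminus e(X)|}=\delta_t(e)$. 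Next let $e\colon X\hookrightarrow Y$ be a monomorphism in $G-\Sets_{free}$, seen as an epimorphism in $G-\Sets^{op}_{free}$; freeness gives $|(Y\setminus e(X))/G|=|Y\setminus e(X)|/|G|$, hence $\delta'_{t^{|G|}}(e)=(t^{|G|})^{|Y\setminus e(X)|/|G|}=t^{|Y\setminus e(X)|}=\delta_t(\Res(e))$. This is exactly why $\delta'_{t^{|G|}}$, and not $\delta'_t$, must be taken as the source degree function for $\overline{\Res}$.

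All hypotheses of \cref{lem:knop_constr_functorial} now hold in both cases, giving the lemma. The only step needing a little care is the claim that $G\times-$ and $\Res$ still preserve the pullbacks used in Knop's construction after restricting to $G-\Sets^{op}_{free}$; this is contained in the regularity of $G-\Sets^{op}_{free}$ already invoked in \cref{ssec:examples_Knop_constr}, and every remaining verification is elementary orbit-counting.
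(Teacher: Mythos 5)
Your proposal is correct and follows essentially the same route as the paper: the paper likewise establishes the lemma as an immediate corollary of \cref{lem:knop_constr_functorial}, using the adjoint triple $Fun(G,-)\vdash \Res\vdash (G\times -)$ for preservation of pullbacks and terminal objects, and the same orbit-counting computation (freeness giving $|(Y\setminus e(X))/G|=|Y\setminus e(X)|/|G|$) to see that $\Ind$ matches $\delta_t$ with $\delta'_t$ while $\Res$ requires the rescaled degree function $\delta'_{t^{|G|}}$. Your write-up merely makes these verifications more explicit than the paper does.
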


In terms of the universal property of $\underline{\Rep}(S_t)$, the functor $\overline{\Ind}$ corresponds to the generating object $V$ (the categorical analogue of the permutation representation) of $\underline{\Rep}(G \wr S_t)$. By \cite[Remark 4.2]{LS}, this object carries the structure of a special commutative Frobenius object.

\subsubsection{Relative wreath products}\label{sec:relative-wreath}

More generally consider a subgroup $H \subset G$ of a finite group $G$. Then we have a restriction (forgetful) functor \[ Res_H^G: G-\Sets \;\longrightarrow \; H-\Sets. \] It admits a left and a right adjoint \[ Ind_H^G, Coind_H^G : H-\Sets \;\longrightarrow \; G-\Sets, \] defined as follows. Let $Y\in H-\Sets$. Then we take $Coind_H^G(Y) := \Hom_H(G, Y)$ and $Ind_H^G(Y) := G \times Y/\sim$ where $\sim$ is the equivalence relation given by $(gh,y) \sim (g, hy)$ for $g,h \in G, \ y \in Y$. The group $G$ acts on both these $G$-sets via left multiplication on the first component $G$ (and trivial action on $Y$). Thus we have a triple of adjoint functors $Ind_H^G\vdash Res_H^G\vdash Coind_H^G $. The functors $Ind_H^G, Res_H^G$ have right adjoints, so they preserve pushforwards.

Clearly, the functor $Res_H^G$ takes free $G$-sets to free $H$-sets. Furthermore, $Ind_H^G$ takes free $H$-sets to free $G$-sets: since both the regular action of $G$ on itself and the action of $H$ on $Y$ are free, so is the induced action. As in \cref{sssec:wreath_G_sets_ex}, the induction and restriction functors yield an adjoint pair $(\Res, \Ind)$ where $$\Ind: H-\Sets^{op}_{free} \;\leftrightarrows \; G-\Sets^{op}_{free}: \Res.$$

These functors $\Ind, \Res$ preserve  the terminal objects (empty sets) and pullbacks, since $Ind_H^G, Res_H^G$ preserve pushforwards.

For $t\in \K$, the functor $\Ind$ sends a transitive $H$-set to a transitive $G$-set, so it preserves the degree functions $\delta^H_t$ and $\delta^G_t$ on $H-\Sets^{op}_{free} $, $ G-\Sets^{op}_{free}$ respectively (see \cref{ssec:examples_Knop_constr} for definition). The restriction functor $\Res:G-\Sets^{op}_{free} \to H-\Sets^{op}_{free}$ matches the degree function $\delta_t$ on $H-\Sets^{op}_{free}$ with $\delta'_{t^{|G/H|}}$ on $G-\Sets_{free}^{op}$. Hence we obtain, as in \cref{lem:wreath}, the existence of symmetric monoidal functors

\begin{align*} \overline{\Ind}: \underline{\Rep}(H \wr S_t) & \to \underline{\Rep}(G \wr S_t), \\
\overline{\Res}: \underline{\Rep}(G \wr S_t) & \to \underline{\Rep}(H \wr S_{t^{|G/H|}}).\end{align*}

\comment{
\Inna{What about the case when $t^{|G/H|}=t$? Are these adjoint?}
}
\subsubsection{Relative iterated wreath products}

The same reasoning yields an adjoint pair $(\Res, \Ind)$ where $$\Ind: H-\Sets^{op} \;\leftrightarrows \; G-\Sets^{op}: \Res.$$
Again, these functors $\Ind, \Res$ preserve  the terminal objects (empty sets) and pullbacks, since $Ind_H^G, Res_H^G$ preserve pushforwards.

Recall that the degree functions on $ G-\Sets^{op} $ are parameterized by the functions $\phi: Tran(G) \to \K$ on the set of $G$-transitive sets up to $G$-isomorphism.

Given a function $\phi_H: Tran(H)\to \K$, let $\phi_G:Tran(G)\to \K$ be the function given by $\phi_G(S):=\prod_{S'\in Tran(H)} [Res(S):S']$ (here $[Res(S):S']$ denotes the number of $H$-orbits in $S$ which are isomorphic to $S'$). The restriction functor $\Res:G-\Sets^{op} \to H-\Sets^{op}$ matches the degree function $\delta_{\phi_H}$ on $H-\Sets^{op}$ with $\delta'_{\phi_G}$ on $G-\Sets_{free}$.

The functor $\Ind$ sends a transitive $H$-set to a transitive $G$-set, so we obtain a map $Ind:Tran(H) \to Tran(G)$. Given a function $\phi: Tran(G)\to \K$, the function $\phi \circ Ind: Tran(H)\to \K$ defines a degree function on $H-\Sets^{op}_{free} $. Then $\Ind$ matches the degree function $\delta_{\phi  \circ Ind}$ on $ H-\Sets^{op}_{free}$ to the degree function $\delta_{\phi}$ on $ G-\Sets^{op}_{free}$. So for any $\phi_H: Tran(H)\to \K$ and any $\phi: Tran(G)\to \K$, we obtain symmetric monoidal functors

\begin{align*} \overline{\Ind}: \mathcal{T}(H-\Sets^{op},\delta_{\phi}) & \to \mathcal{T}(G-\Sets^{op},\delta_{\phi\circ Ind}), \\
\overline{\Res}: \mathcal{T}(G-\Sets^{op},\delta_{\phi_G}) & \to \mathcal{T}(H-\Sets^{op},\delta_{\phi_H}).\end{align*}

\subsubsection{Products of general linear groups over $\mathbb{F}$}
Let $\F$ be a finite field and $G$ be a finite group. We have the restriction (forgetful) functor \[ \Res:   \Rep(G,\F) \, \longrightarrow \,  \Vect{\F}\] and the induction functor \[ \:\Vect{\F} \, \longrightarrow\,  \Rep(G,\F) , \;\;\; V\, \longmapsto \,\F[G]\otimes V \]  which are both left- and right-adjoint (up to a natural isomorphism) to one another by Frobenius reciprocity. This implies that both functors preserve pullbacks, and clearly they preserve the terminal objects as well.

Let $t\in \K$; it corresponds to the degree function $\delta_t$ on $\Vect{\F}$, as described before. 
Recall the notation from \cref{sssec:rep_finite_group}: for any $t\in \K$ we define $\tau_t: Irr(G) \to \K$ given by $\tau_t(I):=t^{\dim I}$ for each $I\in Irr(G)$; for any  function $\tau: Irr(G) \to \mathbb{K}$ we denote $t_{\tau}:=\prod_{I\in Irr(G)} \tau(I)^{\dim I} \in \K$.

\begin{corollary}\label{cor:ind_res_functoriality_rep_G}

\begin{enumerate}
    \item Fix $t\in \K$. The functor $\Res:  \Rep(G,\F) \to \Vect{\F}$ preserves the degree functions $\delta_t, \delta_{\tau_t}$, the pullbacks and the terminal
objects. In particular, we obtain a symmetric monoidal functor \[ \overline{\Res}: \mathcal{T}(\Rep(G,\F),\delta_{\tau_t})\to \underline{\Rep}(GL_t(\F)):=\mathcal{T}(\Vect{\F},\delta_t) . \]
    \item Fix $\tau: Irr(G) \in \mathbb{K}$. The functor $\Ind: \Vect{\F} \to \Rep(G,\F)$ preserves the degree functions $\delta_{t_{\tau}}, \delta_{\tau}$, the pullbacks and the terminal
objects. In particular, we obtain a symmetric monoidal functor \[ \overline{\Ind}: \underline{\Rep}(GL_{t_{\tau}}(\F)):=\mathcal{T}(\Vect{\F},\delta_{t_{\tau}}) \to \mathcal{T}(\Rep(G,\F),\delta_{\tau}). \]
\end{enumerate}
\end{corollary}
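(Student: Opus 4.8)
The plan is to reduce both statements to \cref{lem:knop_constr_functorial}, so all that must be checked, in each part, is that the functor in question preserves pullbacks, terminal objects, and the indicated degree functions. The first two properties are immediate and were essentially recorded before the statement: $\Res$ is exact, $\Ind=\F[G]\otimes_\F(-)$ is exact, and an exact functor preserves all finite limits, in particular pullbacks and the zero object; alternatively, $\Res$ has the right adjoint $\Ind$ (coinduction, which coincides with induction since $G$ is finite) and $\Ind$ has the right adjoint $\Res$, so each preserves limits. Both functors also preserve epimorphisms (a surjective $G$-map is surjective on underlying spaces, and $\F[G]\otimes_\F(-)$ is exact). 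So in each part it remains only to match degree functions.

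For part (1), let $e\colon V\twoheadrightarrow W$ be an epimorphism in $\Rep(G,\F)$. Since $\Res$ is exact, $\ker(\Res(e))=\Res(\ker(e))$, whence $\delta_t(\Res(e))=t^{\dim_\F\ker(e)}$. On the other hand, as $char\,\F\nmid|G|$, Maschke's theorem makes $\ker(e)$ semisimple, so $\dim_\F\ker(e)=\sum_{I\in Irr(G)}[\ker(e):I]\,\dim_\F I$, and therefore
$$\delta_{\tau_t}(e)=\prod_{I\in Irr(G)}\tau_t(I)^{[\ker(e):I]}=\prod_{I\in Irr(G)}\bigl(t^{\dim_\F I}\bigr)^{[\ker(e):I]}=t^{\dim_\F\ker(e)}=\delta_t(\Res(e)).$$
Thus $\Res$ preserves the degree functions $\delta_{\tau_t}$ and $\delta_t$, and \cref{lem:knop_constr_functorial} produces the claimed symmetric monoidal functor $\overline{\Res}$.

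For part (2), let $e\colon V\twoheadrightarrow W$ be an epimorphism in $\Vect{\F}$. Then $\Ind(e)=\id_{\F[G]}\otimes_\F e$ is surjective with $\ker(\Ind(e))=\F[G]\otimes_\F\ker(e)$, and as a $G$-module (with $G$ acting on the first tensor factor only) $\F[G]\otimes_\F U\cong\F[G]^{\oplus\dim_\F U}$ for any $U\in\Vect{\F}$. Hence $[\ker(\Ind(e)):I]=\dim_\F\ker(e)\cdot[\F[G]:I]$, and
$$\delta_\tau(\Ind(e))=\prod_{I\in Irr(G)}\tau(I)^{\dim_\F\ker(e)\cdot[\F[G]:I]}=\Bigl(\prod_{I\in Irr(G)}\tau(I)^{[\F[G]:I]}\Bigr)^{\dim_\F\ker(e)}.$$
Using $char\,\F\nmid|G|$ and the Wedderburn decomposition of $\F[G]$, the multiplicity of $I$ in the regular representation is $[\F[G]:I]=\dim_\F I$ (over a splitting field; over a general field it is $\dim_{\F_I}I$ with $\F_I=\End_G(I)$ as in \cref{sssec:rep_finite_group}, and the definition of $t_\tau$ is to be read accordingly), so the inner product equals $t_\tau=\prod_{I\in Irr(G)}\tau(I)^{\dim_\F I}$. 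Therefore $\delta_\tau(\Ind(e))=t_\tau^{\dim_\F\ker(e)}=\delta_{t_\tau}(e)$, and \cref{lem:knop_constr_functorial} yields $\overline{\Ind}$.

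The only genuinely delicate point is the bookkeeping in part (2): identifying $\F[G]\otimes_\F\ker(e)$ as a direct sum of copies of the regular representation and then reading off the multiplicities $[\F[G]:I]$ so that $\prod_I\tau(I)^{[\F[G]:I]}$ matches $t_\tau$ by definition. Everything else is a direct application of \cref{lem:knop_constr_functorial}, Maschke's theorem, and exactness of $\Res$ and $\Ind$.
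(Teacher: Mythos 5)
Your proposal is correct and follows essentially the same route as the paper: both reduce to \cref{lem:knop_constr_functorial}, handle pullbacks and terminal objects via the two-sided adjunction $\Res\dashv\Ind\dashv\Res$, and verify the degree functions by the same multiplicity computations on $\ker(e)$. You are in fact slightly more careful than the paper in part (2), where you correctly note that the multiplicity of $I$ in the regular representation is $\dim_{\F_I}I$ rather than $\dim_\F I$ when $\F$ is not a splitting field, a point the paper's own computation glosses over.
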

\begin{proof}

The functor $\Res: \Rep(G,\F) \to \Vect{\F}$ preserves the degree functions $\delta_t, \delta_{\tau_t}$: for a given epimorphism $e\in Epi(\Rep(G,\F))$ we have \[ \delta_t (\Res(e)) = t^{\dim \ker(e)} = \prod_{I \in Irr(G)} t^{[\ker(e):I]\dim I} = \delta_{\tau_t}(e). \]

Now let us consider the induction functor 
$\Ind$. It preserves the 
degree functions $\delta_{\tau}$, $\delta_{t_{\tau}}$ on $\Rep(G,\F)$, $\Vect{\F}$ respectively: for any $e\in Epi(\Vect{\F})$, we have \[ \delta_{\tau} (\Ind(e)) = \prod_{I\in Irr(G)} \tau(I)^{[\ker(\Ind(e)):I]} = \prod_{I\in Irr(G)} \tau(I)^{\dim\ker(e) \dim I} = t_{\tau}^{\dim \ker(e)}=\delta_{t_{\tau}}(e). \] 
\end{proof}

The functor $\overline{Ind}$ is described by the universal property of $ \underline{\Rep}(GL_{t_{\tau}}(\F))$ (see \cite{EAH}), stating that each such symmetric monoidal functor out of $\underline{\Rep}(GL_{t_{\tau}}(\F))$ corresponds to an $\F$-linear Frobenius space object of categorical dimension ${t_{\tau}}$ in the target category. This implies in particular that the generating object of $\mathcal{T}(Rep(G,\F),\delta_{\tau})$, which is the target of the tautological ``matrix representation'' of $GL_{t_{\tau}}(\F)$, is an
 $\F$-linear Frobenius space.

\begin{remark}
\cref{cor:ind_res_functoriality_rep_G} has a relative analogue: given a subgroup $H\subset G$, the induction and restriction functors
$$\Ind^G_H\,:\,\Rep(H, \F) \; \leftrightarrows\;\Rep(G, \F)\,: \, \Res^G_H$$ induce symmetric monoidal functors \begin{align*}
    &\overline{\Ind}\;:\;\mathcal{T}(\Rep(H,\F),\delta^H_{\tau'}) \;\longrightarrow \mathcal{T}(\Rep(G,\F),\delta^G_{\tau'_G})\\
    &\overline{\Res}\;:\;\mathcal{T}(\Rep(G,\F),\delta^G_{\tau}) \;\longrightarrow \mathcal{T}(\Rep(H,\F),\delta^H_{\tau_H})
\end{align*}
Here 
\begin{itemize}
    \item for any $\tau:Irr(G)\to \K$ we define $\tau_H:Irr(G)\to \K$ by setting $$\forall \; I'\in Irr(H), \;\;\; \tau_H(I'):=\prod_{I\in Irr(G)} \tau(I)^{[\Res^G_H I: I']}$$
    \item for any $\tau':Irr(H)\to \K$ we define $\tau'_G:Irr(G)\to \K$ by setting $$\forall \; I\in Irr(G), \;\;\; \tau'_G(I):=\prod_{I'\in Irr(H)} \tau'(I')^{[\Res^G_H I: I']}$$
\end{itemize}
and $\delta^H, \delta^G$ denote the corresponding degree functions on $\Rep(H,\F)$, $\Rep(G,\F)$ respectively.
\end{remark}

\section{Adjoint pairs}\label{sec:adjoint-pairs}
\begin{proposition}\label{prop:adjoint}
Let $\mathcal{A},\mathcal{A}'$ be two regular categories and  let $\delta, \delta'$ be degree functions on $\mathcal{A},\mathcal{A}'$ respectively.

Let $F: \mathcal{A}\rightleftarrows \mathcal{A}':G$, $G \vdash F$ be a pair of adjoint functors preserving the degree functions, such that $G$ preserves pullbacks ($F$ automatically preserves pullbacks, see \cref{rmk:adjoint_pullbacks}).

Let $\varepsilon:GF \Rightarrow \id_{\mathcal{A}}$ and $\eta:\id_{\mathcal{A'}}\Rightarrow FG$ be the corresponding counit and unit of the adjunction, and assume that for any two arrows $f: x\to y$ in $\mathcal{A}$, $f': x'\to y'$ in $\mathcal{A}'$ the commutative squares below are Cartesian:
$$ \xymatrix{&x' \ar_{f'}[d]  \ar^-{\eta_{x'}}[r] & FG(x') \ar^{FG(f')}[d] \\&y' \ar^-{\eta_{y'}}[r]  &FG(y')
} \;\;\;\;  \xymatrix{& GF(x) \ar_{GF(f)}[d] \ar^-{\varepsilon_{x}}[r] &x \ar^{f}[d]  \\ &GF(y) \ar^-{\varepsilon_{y}}[r]&y   
}$$

Then the functor $\overline{G}$ is left adjoint to the functor $\overline{F}$.

\end{proposition}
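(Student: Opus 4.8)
The plan is to construct the adjunction $\overline{G}\vdash\overline{F}$ first on the skeletal categories $\mathcal{T}^0$, by exhibiting its unit and counit and checking the two triangle identities there, and only then to extend it formally to the additive Karoubi completions. By \cref{lem:knop_constr_functorial}(1) both functors $\overline{F}\colon\mathcal{T}^0(\mathcal{A},\delta)\to\mathcal{T}^0(\mathcal{A}',\delta')$ and $\overline{G}\colon\mathcal{T}^0(\mathcal{A}',\delta')\to\mathcal{T}^0(\mathcal{A},\delta)$ are available: $F$ preserves pullbacks being a right adjoint, $G$ does so by hypothesis, and both preserve the degree functions. Writing $\varepsilon\colon GF\Rightarrow\id_{\mathcal{A}}$ and $\eta\colon\id_{\mathcal{A}'}\Rightarrow FG$ for the counit and unit of $G\vdash F$, I would take as candidate counit and unit of $\overline{G}\vdash\overline{F}$ the families of $\mathcal{T}^0$-morphisms
$$\overline{\varepsilon}_{[x]}:=\rel{\Gamma\varepsilon_x}\colon[GF(x)]\to[x],\qquad\overline{\eta}_{[x']}:=\rel{\Gamma\eta_{x'}}\colon[x']\to[FG(x')].$$
By \cref{lem:knop_constr_functorial}(2), $\overline{F}$ sends $\rel{\Gamma g}$ to $\rel{\Gamma F(g)}$ and $\overline{G}$ sends $\rel{\Gamma g}$ to $\rel{\Gamma G(g)}$, and both evidently commute with the duality $(-)^{\vee}$; hence $\overline{G}\circ\overline{F}$ sends $[x]\mapsto[GF(x)]$, $\rel{\Gamma f}\mapsto\rel{\Gamma GF(f)}$, $\rel{\Gamma f}^{\vee}\mapsto\rel{\Gamma GF(f)}^{\vee}$, and symmetrically for $\overline{F}\circ\overline{G}$.

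The bulk of the work is to prove that $\overline{\varepsilon}$ and $\overline{\eta}$ are natural transformations. By the presentation of \cite[Theorem 3.1]{K3}, every morphism of $\mathcal{T}^0(\mathcal{A},\delta)$ is a $\K$-linear combination of composites of the generators $\rel{\Gamma f}$ and $\rel{\Gamma f}^{\vee}$; since the set of morphisms at which a given family of morphisms is natural is closed under composition and under $\K$-linear combinations, it suffices to test naturality against these two types of generators. Against $\rel{\Gamma f}$ this is immediate from the naturality of $\varepsilon$ (resp.\ $\eta$) in $\mathcal{A}$ together with the relation $\rel{\Gamma a}\circ\rel{\Gamma b}=\rel{\Gamma(ab)}$: for instance $\rel{\Gamma f}\circ\overline{\varepsilon}_{[x]}=\rel{\Gamma(f\circ\varepsilon_x)}=\rel{\Gamma(\varepsilon_y\circ GF(f))}=\overline{\varepsilon}_{[y]}\circ\rel{\Gamma GF(f)}$. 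Against $\rel{\Gamma f}^{\vee}$ the relevant tool is the second defining relation of the presentation, which permits commuting a $(-)^{\vee}$-morphism past a graph morphism exactly along a Cartesian square; applied to the square
$$\xymatrix{& GF(x)\ar^{\varepsilon_x}[r]\ar_{GF(f)}[d] & x\ar^{f}[d] \\ & GF(y)\ar^{\varepsilon_y}[r] & y}$$
--- Cartesian by hypothesis --- it yields precisely the naturality square of $\overline{\varepsilon}$ at $\rel{\Gamma f}^{\vee}$, and the other assumed Cartesian square does the same for $\overline{\eta}$. I expect this step --- matching the two Cartesian-square hypotheses with the naturality squares of $\overline{\varepsilon}$ and $\overline{\eta}$ at the $(-)^{\vee}$-generators --- to be the only genuinely delicate point, and it is essential (cf.\ the Remark in \cref{ssec:embedding_orig_cat}, where it is noted that the identity $\rel{\Gamma f}^{\vee}\circ\rel{\Gamma g}=\rel{\Gamma g'}\circ\rel{\Gamma f'}^{\vee}$ generally fails off the Cartesian case).

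It then remains to verify the two triangle identities for $\overline{G}\vdash\overline{F}$, and these follow from those for $G\vdash F$ using $\rel{\Gamma a}\circ\rel{\Gamma b}=\rel{\Gamma(ab)}$ and $\rel{\Gamma\id}=\id$: one computes $\overline{\varepsilon}_{\overline{G}[x']}\circ\overline{G}(\overline{\eta}_{[x']})=\rel{\Gamma\varepsilon_{G(x')}}\circ\rel{\Gamma G(\eta_{x'})}=\rel{\Gamma(\varepsilon_{G(x')}\circ G(\eta_{x'}))}=\rel{\Gamma\id_{G(x')}}=\id_{\overline{G}[x']}$, invoking the triangle identity $\varepsilon G\circ G\eta=\id_G$, and, symmetrically, $\overline{F}(\overline{\varepsilon}_{[x]})\circ\overline{\eta}_{\overline{F}[x]}=\rel{\Gamma(F(\varepsilon_x)\circ\eta_{F(x)})}=\id_{\overline{F}[x]}$ from $F\varepsilon\circ\eta F=\id_F$. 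This establishes $\overline{G}\vdash\overline{F}$ on $\mathcal{T}^0$. Finally, since the extensions of $\overline{F}$ and $\overline{G}$ to the additive Karoubi completions $\mathcal{T}(\mathcal{A},\delta)$ and $\mathcal{T}(\mathcal{A}',\delta')$ are already in place (as recorded just after \cref{lem:knop_constr_functorial}), and a $\K$-linear adjunction --- its unit, counit and the triangle identities --- extends uniquely along the additive hull and the splitting of idempotents, we obtain the asserted adjunction $\overline{G}\vdash\overline{F}$ between $\mathcal{T}(\mathcal{A},\delta)$ and $\mathcal{T}(\mathcal{A}',\delta')$.
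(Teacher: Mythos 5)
Your proposal is correct and follows essentially the same route as the paper's proof: the same candidate unit and counit $\rel{\Gamma\eta_{x'}}$, $\rel{\Gamma\varepsilon_x}$, the same reduction of naturality to the generators $\rel{\Gamma f}$ and $\rel{\Gamma f}^{\vee}$ via the presentation of \cite[Theorem 3.1]{K3}, the same use of the Cartesian-square hypotheses to handle the $(-)^{\vee}$-generators, and the same verification of the triangle identities from those of $G\vdash F$. The only (harmless) additions are your explicit remarks that naturality is closed under composition and linear combination and that the adjunction passes to the additive Karoubi completion, both of which the paper leaves implicit.
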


\begin{remark}\label{rmk:adjoint_pullbacks}
Since $G\vdash F$, the functor $G$ automatically preserves colimits (in particular, epimorphisms), and the functor $F$ automatically preserves limits (in particular, pullbacks and monomorphisms).

\end{remark}

\begin{proof}[Proof of \cref{prop:adjoint}]
By \cref{lem:knop_constr_functorial}, the functors $\overline{F}, \overline{G}$ are well-defined.

For any $x\in \mathcal{A}$, $x'\in \mathcal{A}'$, consider the maps $\Gamma_{\mathcal{A}'}(\eta_{x'}): [x'] \to \overline{F}\,\overline{G}[x']$ and $\Gamma_{\mathcal{A}}(\varepsilon_x): \overline{G}\,\overline{F}[x] \to [x]$. Let us show that these maps define natural transformations $\Gamma_{\mathcal{A}'}(\eta): \id_{\mathcal{T}^0(\mathcal{A}',\delta')} \Longrightarrow \overline{F} \,\overline{G}$ and $\Gamma_{\mathcal{A}}(\varepsilon): \overline{G}\,\overline{F}\Longrightarrow \id_{\mathcal{T}^0(\mathcal{A},\delta)} $. Indeed, by \cref{ssec:embedding_orig_cat}, the morphisms in $\mathcal{T}^0(\mathcal{A},\delta) $ are generated by maps of the form $\Gamma_{\mathcal{A}}(f), \Gamma_{\mathcal{A}}(f)^{\vee}$ where $f$ is a morphism in $\mathcal{A}$ (and similarly for $\mathcal{A}'$). So it is enough to check that for any $f:x\to y$ in $\mathcal{A}$ and any $f':x'\to y'$ in $\mathcal{A}'$, the following four diagrams are commutative:
\begin{align*}
   & \xymatrix{&[x'] \ar_{\Gamma_{\mathcal{A}'}(f')}[d]  \ar^-{\Gamma_{\mathcal{A}'}(\eta_{x'})}[rr] && \overline{F}\,\overline{G}[x'] \ar^{\overline{F}\,\overline{G}\Gamma_{\mathcal{A}'}(f')}[d] \\&[y'] \ar^-{\Gamma_{\mathcal{A}'}(\eta_{y'})}[rr]  &&\overline{F}\,\overline{G}[y']
} 
\;\;\;\;
\xymatrix{&[y'] \ar_{\Gamma_{\mathcal{A}'}(f')^{\vee}}[d]  \ar^-{\Gamma_{\mathcal{A}'}(\eta_{y'})}[rr] && \overline{F}\,\overline{G}[y'] \ar^{\overline{F}\,\overline{G}(\Gamma_{\mathcal{A}'}(f')^{\vee})}[d] \\&[x'] \ar^-{\Gamma_{\mathcal{A}'}(\eta_{x'})}[rr]  &&\overline{F}\,\overline{G}[x']
}  \\
 & \xymatrix{&\overline{G}\,\overline{F}[x] \ar_{\overline{G}\,\overline{F}\Gamma_{\mathcal{A}}(f)}[d]  \ar^-{\Gamma_{\mathcal{A}}(\varepsilon_{x})}[rr] && [x] \ar^{\Gamma_{\mathcal{A}}(f)}[d] \\&\overline{G}\,\overline{F}[y] \ar^-{\Gamma_{\mathcal{A}}(\varepsilon_{y})}[rr]  &&[y]
}  
\;\;\;\;
\xymatrix{&\overline{G}\,\overline{F}[y] \ar_{\overline{G}\,\overline{F}(\Gamma_{\mathcal{A}}(f)^{\vee})}[d]  \ar^-{\Gamma_{\mathcal{A}}(\varepsilon_{y})}[rr] && [y] \ar^{\Gamma_{\mathcal{A}}(f)^{\vee}}[d] \\&\overline{G}\,\overline{F}[x] \ar^-{\Gamma_{\mathcal{A}}(\varepsilon_{x})}[rr]  &&[x]
}  
\end{align*} 

Recall that we have natural (canonical) isomorphisms $\Gamma_{\mathcal{A}'}F =\overline{F} \Gamma_{\mathcal{A}} $ and $\Gamma_{\mathcal{A}}G =\overline{G} \Gamma_{\mathcal{A}'} $, which induce natural isomorphisms $$\overline{F}\,\overline{G}\Gamma_{\mathcal{A}'}= \Gamma_{\mathcal{A}'} FG, \;\; \overline{G}\,\overline{F}\Gamma_{\mathcal{A}}= \Gamma_{\mathcal{A}} GF.$$

Thus the commutativity of the two leftmost diagrams follows from the fact that $\eta$ and $\varepsilon$ are natural transformations.

For the rightmost diagrams, we first note that we have natural (canonical) isomorphisms $$\overline{F}((-)^{\vee}) \cong (\overline{F}(-))^{\vee},\;\;\;\;\overline{G}((-)^{\vee}) \cong (\overline{G}(-))^{\vee},$$ so it is enough to check that we have equalities 
\begin{equation}\label{eq:nat_transf_adjoint}
    (\Gamma_{\mathcal{A}'}FG(f'))^{\vee} \circ \Gamma_{\mathcal{A}'}(\eta_{y'}) = \Gamma_{\mathcal{A}'}(\eta_{x'}) \circ \Gamma_{\mathcal{A}'}(f')^{\vee},\;\;\;\; \Gamma_{\mathcal{A}}(f)^{\vee}\circ \Gamma_{\mathcal{A}}(\varepsilon_{y}) =\Gamma_{\mathcal{A}}(\varepsilon_{x}) \circ (\Gamma_{\mathcal{A}}GF(f))^{\vee}. 
\end{equation}

Next, recall from \cref{ssec:embedding_orig_cat} that $\Gamma {\beta}^{\vee}\circ \Gamma {\alpha} = \Gamma {\alpha'}\circ \Gamma {\beta'}^{\vee}$, for any Cartesian square $$  \xymatrix{&{\bullet} \ar_{\beta'}[d]  \ar^{\alpha'}[r] & {\bullet} \ar^{\beta}[d] \\&{\bullet} \ar^{\alpha}[r]  &{\bullet}   
}$$ either in $\mathcal{A}$ or in $\mathcal{A}'$, with $\Gamma$ denoting $\Gamma_{\mathcal{A}}$ or $\Gamma_{\mathcal{A}'}$ respectively. Equalities \eqref{eq:nat_transf_adjoint} now follow from the assumptions on $\varepsilon, \eta$.

Thus $\Gamma_{\mathcal{A}'}(\eta)$, $\Gamma_{\mathcal{A}}(\varepsilon)$ are natural transformations. 

We now check the ``snake relations'' required to make them unit and counit of adjunction $\overline{G}\vdash \overline{F}$. For any $x\in \mathcal{A}, x'\in \mathcal{A}'$ we need to show that the compositions below are identity maps:
\begin{align*}
&\overline{F}[x] \xrightarrow{\;(\Gamma_{\mathcal{A}'}\eta) \overline{F}\;\;} \overline{F}\,\overline{G}\,\overline{F}[x] \xrightarrow{\;\overline{F}( \Gamma_{\mathcal{A}} \varepsilon) \;\;} \overline{F}[x]
\\
&\overline{G}[x'] \xrightarrow{\;\overline{G}(\Gamma_{\mathcal{A}'}\eta)\;\; } \overline{G}\,\overline{F}\,\overline{G}[x'] \xrightarrow{\; ( \Gamma_{\mathcal{A}} \varepsilon) \overline{G} \;\;} \overline{G}[x']
\end{align*}
Yet these compositions are just given by \begin{align*} \Gamma_{\mathcal{A}'}(F\varepsilon_x)\circ \Gamma_{\mathcal{A}'}(\eta_{F(x)}) & = \Gamma_{\mathcal{A}'}(\id_{F(x)}) = \id_{\overline{F}[x]} \;\; \text{ and } \\\Gamma_{\mathcal{A}}(\varepsilon_{G(x')})\circ \Gamma_{\mathcal{A}}(G(\eta_{x'})) & = \Gamma_{\mathcal{A}}(\id_{G(x')}) = \id_{\overline{G}[x']}\end{align*} 
as required.
\end{proof}

An immediate corollary of \cref{prop:adjoint} is:
\begin{corollary}
Let $\mathcal{A}, \mathcal{A}', F, G$ be as in \cref{prop:adjoint}. Then the induced $\K$-linear symmetric monoidal functor $\overline{F}:\mathcal{T}(\mathcal{A},\delta) \longrightarrow \mathcal{T}(\mathcal{A}',\delta')$ preserves limits (those which exist in $\mathcal{T}(\mathcal{A},\delta)$). 
\end{corollary}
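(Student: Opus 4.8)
The claim is a standard consequence of the adjunction established in Proposition~\ref{prop:adjoint}, via the Karoubi envelope. The plan is to argue that adjunctions are preserved by additive Karoubi completion, and then that a right adjoint automatically preserves all limits that exist in the source.

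\textbf{Step 1: Lift the adjunction to the Karoubi envelopes.} Proposition~\ref{prop:adjoint} gives an adjunction $\overline{G}\vdash\overline{F}$ between the skeletal categories $\mathcal{T}^0(\mathcal{A},\delta)$ and $\mathcal{T}^0(\mathcal{A}',\delta')$. Both $\overline{F}$ and $\overline{G}$ are $\K$-linear functors between $\K$-linear categories, so they extend uniquely (up to natural isomorphism) to $\K$-linear functors between the additive Karoubi completions $\mathcal{T}(\mathcal{A},\delta)$ and $\mathcal{T}(\mathcal{A}',\delta')$; I will keep the notation $\overline{F},\overline{G}$ for these extensions. The point is that an additive functor sends an idempotent $p=p^2$ on an object $X$ to the idempotent $\overline{F}(p)$ on $\overline{F}(X)$, hence sends the formal image $(X,p)$ to $(\overline{F}(X),\overline{F}(p))$, and this assignment is functorial on idempotent-splitting morphisms. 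The unit $\Gamma_{\mathcal{A}'}(\eta)$ and counit $\Gamma_{\mathcal{A}}(\varepsilon)$ from the proof of Proposition~\ref{prop:adjoint} then extend to natural transformations between the corresponding composites of the extended functors, because a natural transformation between additive functors extends canonically to the Karoubi envelope by conjugating with the splitting idempotents; and the two triangle (snake) identities, being equalities of morphisms, persist under this extension since they hold on the image of $\Gamma$ and everything is built additively from such morphisms. Thus $\overline{G}\vdash\overline{F}$ as functors $\mathcal{T}(\mathcal{A},\delta)\rightleftarrows\mathcal{T}(\mathcal{A}',\delta')$.

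\textbf{Step 2: Right adjoints preserve limits.} It is a classical fact (see, e.g., \cite[\S V.5]{maclane}) that any functor which is a right adjoint preserves all limits that exist in its domain: if $D:\mathcal{J}\to\mathcal{T}(\mathcal{A},\delta)$ is a diagram with limit $L=\lim D$, then for every object $Z'\in\mathcal{T}(\mathcal{A}',\delta')$ one has natural bijections
\[
\Hom_{\mathcal{T}(\mathcal{A}',\delta')}(Z',\overline{F}(L)) \cong \Hom_{\mathcal{T}(\mathcal{A},\delta)}(\overline{G}(Z'),L) \cong \lim_{\mathcal{J}} \Hom_{\mathcal{T}(\mathcal{A},\delta)}(\overline{G}(Z'),D) \cong \lim_{\mathcal{J}} \Hom_{\mathcal{T}(\mathcal{A}',\delta')}(Z',\overline{F}D),
\]
using the adjunction twice and the fact that $\Hom(W,-)$ sends limits to limits. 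By the Yoneda lemma this exhibits $\overline{F}(L)$ as $\lim_{\mathcal{J}}\overline{F}D$, so $\overline{F}$ preserves the limit.

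\textbf{Main obstacle.} The only genuinely substantive point is the bookkeeping in Step~1: verifying that the extension of an additive adjunction to the Karoubi envelope is again an adjunction. This is routine once one recalls the universal property of additive idempotent completion (additive functors and natural transformations extend uniquely), but it must be stated carefully since Proposition~\ref{prop:adjoint} was only proved at the level of $\mathcal{T}^0$. Everything downstream --- symmetric monoidality of $\overline{F}$ is already recorded in Lemma~\ref{lem:knop_constr_functorial}, and limit-preservation is the formal Step~2 --- requires no further work.
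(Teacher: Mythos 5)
Your proof is correct and follows exactly the route the paper intends: the paper offers no written proof, simply calling the statement an ``immediate corollary'' of Proposition~\ref{prop:adjoint}, and the implicit argument is precisely your two steps --- extend the adjunction $\overline{G}\vdash\overline{F}$ from the skeletal categories to their additive Karoubi completions, then invoke the fact that right adjoints preserve all limits existing in their domain. Your Step~1, spelling out that additive functors, natural transformations, and the triangle identities pass to the idempotent completion, is a useful explicit record of the bookkeeping the authors leave to the reader.
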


\section{Examples of adjoint pairs and their induced functors}\label{sec:examples_adj_pairs}
\subsection{Overcategory (slice category)}\label{ssec:slice_cat}
Let $\mathcal{A}$ be a regular category and let $\delta$ be a degree function on $\mathcal{A}$. 

Let $x_0\in \mathcal{A}$ and let $\mathcal{A}/x_0$ be the slice category (overcategory) of $\mathcal{A}$ over $x_0$: its objects are pairs $(x, p_x)$ where $x\in \mathcal{A}$, $p_x:x\to x_0$, and a morphism $(x,p_x)\to (y,p_y)$ in $\mathcal{A}/x_0$ is given by $f: x\to y$ such that $p_y\circ f=p_x$. 

By \cite[Example 3, Section 2]{K}, the category $\mathcal{A}/x_0$ is regular as well. 
Consider the induced degree function $\delta'$ on $\mathcal{A}/x_0$:
given $e:(x, p_x)\twoheadrightarrow (y, p_y)$ in $Epi(\mathcal{A}/x_0)$, we let $\delta'(e):=\delta(e)$ where the latter $e$ denotes the epimorphism $x\twoheadrightarrow y$ in $Epi(\mathcal{A})$.

We have obvious functors
$F : \mathcal{A}\rightleftarrows \mathcal{A}/x_0:G$ where $F:= (-) \times x_0$ and $G$ is the forgetful functor $(x, p_x)\mapsto x$. Clearly, $G$ is left adjoint to $F$, both functors preserve the degree functions $\delta, \delta'$, and $G$ preserves pullbacks.

\begin{corollary}\label{cor:slice_cat}
The functor $F: \mathcal{A}\to \mathcal{A}/x_0$ preserves finite limits and the induced $\K$-linear symmetric monoidal functor $\overline{F}:\mathcal{T}(\mathcal{A},\delta) \longrightarrow \mathcal{T}(\mathcal{A}',\delta')$ preserves finite limits. 
\end{corollary}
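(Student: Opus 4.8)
\textbf{Proof proposal for \cref{cor:slice_cat}.}

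The plan is to verify that the pair $F = (-)\times x_0 \dashv$ ... wait, that $G \vdash F$ with $G$ the forgetful functor, together with the induced degree function $\delta'$, satisfies all the hypotheses of \cref{prop:adjoint}, and then invoke the Corollary following \cref{prop:adjoint} to conclude that $\overline{F}$ preserves those limits which exist in $\mathcal{T}(\mathcal{A},\delta)$. The separate assertion that $F$ itself preserves finite limits is standard category theory and I would dispatch it first: for any slice category $\mathcal{A}/x_0$, the forgetful functor $G$ creates all limits that exist in $\mathcal{A}$, so $G$ preserves and reflects them; since $G$ is faithful and $GF = (-)\times x_0$ preserves finite limits (products distribute over pullbacks and over the terminal object, using that $x_0\times x_0 \to x_0$ via either projection makes $(x_0, \id)$ the terminal object of $\mathcal{A}/x_0$ with $F(\mathbf 1_{\mathcal{A}}) = \mathbf 1_{\mathcal A}\times x_0 \cong x_0$), we get that $F$ preserves finite limits.

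The main work is checking the two Cartesian-square conditions in \cref{prop:adjoint}. Here the counit $\varepsilon: GF \Rightarrow \id_{\mathcal A}$ is, for $x\in\mathcal A$, the first projection $\varepsilon_x : x\times x_0 \to x$; and the unit $\eta: \id_{\mathcal A/x_0}\Rightarrow FG$ sends $(x',p_{x'})$ to the map $x' \to x'\times x_0$ with components $(\id_{x'}, p_{x'})$, i.e. the graph of $p_{x'}$. For the counit square associated to $f: x\to y$ in $\mathcal A$, we must show that
$$\xymatrix{& x\times x_0 \ar_{f\times\id}[d] \ar^-{\varepsilon_x}[r] & x \ar^{f}[d] \\ & y\times x_0 \ar^-{\varepsilon_y}[r] & y}$$
is Cartesian in $\mathcal A$; but this is exactly the statement that $y\times x_0 \times_y x \cong x\times x_0$, which holds because $(-)\times x_0$ applied to $x \to y$ followed by pulling back along $\id$, equivalently because products commute with pullbacks. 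For the unit square associated to $f':(x',p_{x'})\to(y',p_{y'})$ in $\mathcal A/x_0$, the claim that
$$\xymatrix{& x' \ar_{f'}[d] \ar^-{\eta_{x'}}[r] & x'\times x_0 \ar^{f'\times\id}[d] \\ & y' \ar^-{\eta_{y'}}[r] & y'\times x_0}$$
is Cartesian reduces, upon forgetting down to $\mathcal A$ (where $G$ reflects pullbacks), to verifying that $y' \times_{y'\times x_0}(x'\times x_0) \cong x'$; the pullback of $(\id_{y'},p_{y'})$ along $f'\times\id$ is computed componentwise and, using $p_{y'}\circ f' = p_{x'}$, is seen to be the graph of $\id_{x'}$, i.e. isomorphic to $x'$. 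I expect this componentwise check to be the one genuinely fiddly point, and I would phrase it by noting that $\eta_{y'}$ is a section of the projection $y'\times x_0\to y'$ whose second component is $p_{y'}$, so that pulling it back along any map over $y'$ (which $f'\times\id$ is, being compatible with the structure maps) again yields a section, forced to be $\eta_{x'}$ by compatibility of $f'$ with the $p$'s.

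Finally, I would also record that $G$ preserves pullbacks (immediate, since $G$ creates all limits existing in $\mathcal A$) and that both $F$ and $G$ preserve the degree functions: for $e:(x,p_x)\twoheadrightarrow(y,p_y)$ in $Epi(\mathcal A/x_0)$ we have by definition $\delta'(e) = \delta(G(e))$, and for $e:x\twoheadrightarrow y$ in $Epi(\mathcal A)$ the morphism $F(e) = e\times\id_{x_0}$ is a pullback of $e$, hence $\delta'(F(e)) = \delta(e)$ since $\delta$ preserves pullbacks. With all hypotheses of \cref{prop:adjoint} in place, $\overline{G}\vdash\overline{F}$, and therefore $\overline F$ preserves all limits that exist in $\mathcal T(\mathcal A,\delta)$, in particular finite limits.
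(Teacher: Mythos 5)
Your proposal is correct and follows essentially the same route as the paper: identify the counit as the first projection and the unit as the graph map $(\id,p_{x'})$, verify the two Cartesian-square hypotheses of \cref{prop:adjoint} (the paper simply calls them ``clearly Cartesian'' where you spell out the componentwise check), and invoke the corollary to \cref{prop:adjoint}. The only stylistic difference is your argument that $F$ preserves finite limits via $G$ creating limits; the paper leaves this implicit, and it also follows at once from $F$ being a right adjoint.
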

\begin{proof}
    We need to check the Cartesian square conditions appearing in \cref{prop:adjoint}. Indeed, let $x\in \mathcal{A}$. The counit $\varepsilon_x: x\times x_0\to x$ is just the projection onto the first factor and for any $f:x\to y$, the square $\xymatrix{& x\times x_0 \ar_{f\times \id_{x_0}}[d] \ar^-{\varepsilon_{x}}[r] &x \ar^{f}[d]  \\ &y \times x_0 \ar^-{\varepsilon_{y}}[r]&y   
} $ is clearly Cartesian.
Similarly, for any $x\in \mathcal{A}$, $p_x:x\to x_0$, the unit $\eta_{(x, p_x)}: (x, p_x)\to (x\times x_0,  pr_{x_0}:x\times x_0 \to x_0)$ is just $(\id, p_x)$. So for any map $f:(x, p_x)\to (y, p_y)$ in $\mathcal{A}/x_0$, the square
 $$\xymatrix{&(x, p_x) \ar_{f}[d]  \ar^-{\eta_{(x, p_x)}}[rr] & & (x\times x_0, pr_{x_0}) \ar^{f\times \id_{x_0}}[d] \\&(y, p_y) \ar^-{\eta_{(y, p_y)}}[rr] & &(y\times x_0, pr_{x_0})
}$$ is clearly Cartesian as well.
\end{proof}
\begin{remark}
    If we replace $\mathcal{A}/x_0$ by the category whose objects are pairs $(x, p_x)$ where $x\in \mathcal{A}$, $p_x:x\twoheadrightarrow x_0$ is surjective, then the forgetful functor $G$ will not preserve pullbacks, and the functor $\overline{G}$ will not necessarily be defined on $\mathcal{T}^0(\mathcal{A}, \delta)$, e.g. if $\mathcal{A}=\Vect{\F_q}$.
\end{remark}

We apply this construction in the case $\mathcal{A}:=\Vect{\F_q}$.
 
Fix $t\in \mathbbm{k}$ and $V_0 \in \Vect{\F_q}$, and consider the overcategory $\Vect{\F_q}/V_0$.  We fix the degree function $\delta_t$ on $\Vect{\F_q}$ given by $\delta_t(e):=t^{\dim \ker(e)}$ for $e\in Epi(\Vect{\F_q})$. 

\begin{remark}\label{remark-max-par}
The Karoubi symmetric monoidal category
$$\mathcal{T}(\Vect{\F_q}/V_0, \delta'_t)$$ interpolates categories of representations of groups $\Aut(V, p:V\to V_0)$. Such groups can be identified with maximal parabolic subgroups of $GL_n(\F_q)$ for different values of $n$, where the parabolic subgroup preserves an $\F_q$-subspace of $\F_q^n$ whose dimension is at most $\dim_{\F_q} V_0$.
\end{remark}

Let $F: \Vect{\F_q} \to \Vect{\F_q}/V_0$ be given by $F:=(-)\oplus V_0$.
By \cref{cor:slice_cat}, we have:
\begin{corollary}
The symmetric monoidal functor
$$\overline{F}: \underline{\Rep}(GL_{t}(\F_q)) \to  \mathcal{T}(\Vect{\F_q}/V_0, \delta'_t)$$
preserves finite limits.
\end{corollary}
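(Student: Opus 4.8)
The plan is to instantiate \cref{cor:slice_cat} with $\mathcal{A}:=\Vect{\F_q}$, the object $x_0:=V_0$, and the degree function $\delta:=\delta_t$, after which the conclusion is essentially immediate. First I would verify that the hypotheses of \cref{cor:slice_cat} — equivalently, those of \cref{prop:adjoint} — are satisfied in this concrete setting. The adjoint pair is $F:=(-)\oplus V_0:\Vect{\F_q}\rightleftarrows \Vect{\F_q}/V_0:G$ with $G$ the forgetful functor, and $G\vdash F$ is the standard adjunction for a slice category (noting that in $\Vect{\F_q}$ the product is the direct sum $\oplus$). One checks that $G$ preserves pullbacks (it creates all limits that exist in $\mathcal{A}/V_0$), that both functors preserve epimorphisms, and that they preserve the degree functions $\delta_t$ on $\Vect{\F_q}$ and the induced $\delta'_t$ on $\Vect{\F_q}/V_0$: this is built into the definition of $\delta'_t$, since $\delta'_t(e)=\delta_t(e)=t^{\dim\ker e}$ for $e$ viewed as an epimorphism in $\Vect{\F_q}$, and $F$ sends $e:A\twoheadrightarrow B$ to $e\oplus\id_{V_0}$, which has the same kernel.

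Next I would confirm the two Cartesian-square conditions of \cref{prop:adjoint}, exactly as in the proof of \cref{cor:slice_cat}: the counit $\varepsilon_A:A\oplus V_0\to A$ is the projection onto the first summand, so for $f:A\to B$ the square with vertical maps $\varepsilon_A,\varepsilon_B$ and horizontal maps $f\oplus\id_{V_0}, f$ is a pullback (a pullback of vector spaces along a split surjection); the unit $\eta_{(A,p_A)}=(\id_A,p_A):(A,p_A)\to(A\oplus V_0,\pr_{V_0})$ likewise makes the relevant square Cartesian. Having checked these, \cref{cor:slice_cat} applies verbatim and tells us that $F$ preserves finite limits and that the induced $\K$-linear symmetric monoidal functor $\overline{F}:\mathcal{T}(\Vect{\F_q},\delta_t)\to\mathcal{T}(\Vect{\F_q}/V_0,\delta'_t)$ preserves finite limits.

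Finally I would record the identification $\mathcal{T}(\Vect{\F_q},\delta_t)=\underline{\Rep}(GL_t(\F_q))$ from \cref{ssec:examples_Knop_constr}(2), so that the statement reads as the assertion about $\overline{F}:\underline{\Rep}(GL_t(\F_q))\to\mathcal{T}(\Vect{\F_q}/V_0,\delta'_t)$. There is really no obstacle here: the substance of the argument lies entirely in \cref{prop:adjoint} and its corollary \cref{cor:slice_cat}, and the present statement is the specialization to $\mathcal{A}=\Vect{\F_q}$, $x_0=V_0$. The only point that deserves a sentence of care is the observation — already made in the paragraph preceding \cref{cor:slice_cat} and reused in the proof of \cref{cor:slice_cat} — that $G$ does preserve pullbacks, which is why $\overline{G}$ (and hence the adjunction) is defined at the level of $\mathcal{T}^0$; this is in contrast to the surjective variant flagged in the \textbf{Remark} after \cref{cor:slice_cat}, so I would not need to address that case.
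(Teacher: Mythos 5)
Your proposal is correct and matches the paper's argument: the paper deduces this corollary directly from \cref{cor:slice_cat} applied to $\mathcal{A}=\Vect{\F_q}$, $x_0=V_0$, $\delta=\delta_t$ (with $F=(-)\oplus V_0$ since the product in $\Vect{\F_q}$ is the direct sum), exactly as you do. Your extra verifications of the degree-function compatibility and the Cartesian squares merely re-run, in this concrete case, what the paper already established in the paragraph preceding \cref{cor:slice_cat} and in its proof.
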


\comment{
}
\subsection{Iterated wreath products and symmetric groups}\label{ssec:functoriality_iterated_wreath}

We have a triple of adjoint functors $F\vdash T\vdash (-)^G$, where 
\begin{itemize}
    \item $T: \Sets \to G-\Sets$ is given by sending a set to the same set with the trivial $G$-action,
    \item $F:=(-)/G: G-\Sets\to \Sets$ is given by taking the set of orbits of a given $G$-set, and 
    \item $ (-)^G:G-\Sets \to \Sets$ is the functor of taking $G$-invariants, represented by $\Hom_G(\{\bullet\},-)$ where $\{\bullet\}$ is the singleton set with trivial $G$-action.
\end{itemize}

The above functors induce functors $$T:\Sets^{op}\to G-\Sets^{op}, \;\; F, (-)^G :G-\Sets^{op}\to \Sets^{op}$$ and these functors are adjoint: $(-)^G\vdash T\vdash F$. 

\begin{remark}
The functor $(-)^G:G-\Sets^{op}\to \Sets^{op}$ does not preserve pullbacks, since $(-)^G:G-\Sets\to \Sets$ doesn't preserve pushouts. For example, letting $G$ stand for the the free transitive $G$-set, we have: $$\xymatrix{& G \ar[d] \ar[r] &\bullet \ar[d] &\ar_{(-)^G}@{|->}[rr] && & \emptyset \ar[d] \ar[r] &\bullet \ar[d]  \\ &\bullet \ar[r]&\bullet    & & &
&\bullet \ar[r]&\bullet   
} $$ The left square is coCartesian in $G-\Sets$, while the right square is not co-Cartesian in $\Sets$.
\end{remark}

Fix $t\in \K$ and let $ \phi_t:Tran(G)\to \K$ be the constant function $\phi_t \equiv t$. Let $\delta_t$, $\delta_{\phi_t}$ be the corresponding degree functions on $\Sets^{op}$ and $G-\Sets^{op}$ respectively.

\begin{lemma} The pair of adjoint functors $(T, F)$ between $(\Sets^{op}, \delta_t)$ and $(G-\Sets^{op}, \delta_{\phi_t})$ preserve pullbacks, terminal objects and degree functions. They give rise to two symmetric monoidal functors $$\overline{T}\,:\,\underline{\Rep}(S_t)\; \rightleftarrows \;\mathcal{T}(G-\Sets^{op}, \delta_{\phi_t}) \,:\,\overline{F}$$
so that $\Gamma_{\Sets^{op}}(\eta) :  \id\xrightarrow{\sim} \overline{F}\,\overline{T}$ where $\eta$ is the unit of the adjunction $T\vdash F$.
\end{lemma}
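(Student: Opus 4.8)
The plan is to verify the hypotheses of \cref{lem:knop_constr_functorial} separately for the two induced functors $T\colon\Sets^{op}\to G-\Sets^{op}$ and $F\colon G-\Sets^{op}\to\Sets^{op}$ — that each preserves epimorphisms, pullbacks, terminal objects and the prescribed degree function — which at once produces the symmetric monoidal functors $\overline{T},\overline{F}$. The claimed natural isomorphism $\Gamma_{\Sets^{op}}(\eta)\colon\id\xrightarrow{\sim}\overline{F}\,\overline{T}$ is then obtained from the single observation that $FT\cong\id_{\Sets}$, so that $\eta$ is itself a natural isomorphism.

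For pullbacks and terminal objects I would argue in the original categories: the triple $(-)/G\vdash T\vdash(-)^G$ exhibits both $T\colon\Sets\to G-\Sets$ and $F:=(-)/G\colon G-\Sets\to\Sets$ as left adjoints, hence both preserve all colimits; passing to opposites, $T\colon\Sets^{op}\to G-\Sets^{op}$ and $F\colon G-\Sets^{op}\to\Sets^{op}$ preserve all limits, in particular pullbacks and terminal objects (the terminal object of $\Sets^{op}$, resp. $G-\Sets^{op}$, being the initial object $\emptyset$ of $\Sets$, resp. $G-\Sets$). For epimorphisms: a monomorphism in $G-\Sets$ is an injective $G$-map; $T$ visibly preserves injections, and $(-)/G$ sends an injective $G$-map to an injection of orbit sets (if $[f(x)]=[f(x')]$ then $f(x')=g\,f(x)=f(gx)$, whence $x'=gx$), so after dualising both $T$ and $F$ preserve epimorphisms.

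The compatibility with the degree functions is where the constant choice $\phi_t\equiv t$ is used — indeed it is forced by demanding that $T$ preserve degree functions. Write an epimorphism of $\Sets^{op}$ as a monomorphism $e\colon X\hookrightarrow Y$ in $\Sets$; then $T(Y)\setminus T(X)=Y\setminus e(X)$ carries the trivial $G$-action, its orbits are singletons, and $\delta_{\phi_t}(T(e))=\prod_{S\in(Y\setminus e(X))/G}\phi_t(S)=t^{|Y\setminus e(X)|}=\delta_t(e)$. Now write an epimorphism of $G-\Sets^{op}$ as a monomorphism $e\colon X\hookrightarrow Y$ in $G-\Sets$; since $e(X)$ is $G$-stable, each $G$-orbit of $Y$ is either contained in $e(X)$ or disjoint from it, so the orbits of $Y$ not lying in the image of $X/G$ are exactly the orbits of $Y\setminus e(X)$, and $\delta_t(F(e))=t^{|(Y/G)\setminus(X/G)|}=t^{m}=\delta_{\phi_t}(e)$, where $m$ is the number of $G$-orbits of $Y\setminus e(X)$. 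Hence \cref{lem:knop_constr_functorial} yields the symmetric monoidal functors $\overline{T}\colon\underline{\Rep}(S_t)=\mathcal{T}(\Sets^{op},\delta_t)\to\mathcal{T}(G-\Sets^{op},\delta_{\phi_t})$ and $\overline{F}$ in the other direction.

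For the last assertion: on objects $FT(Y)=T(Y)/G=Y$ (the trivial action makes each element its own orbit), and the same identification is compatible with morphisms, so $FT\cong\id_{\Sets}$; correspondingly the unit $\eta\colon\id_{\Sets^{op}}\Rightarrow FT$ — being the reversal of the counit of $(-)/G\vdash T$, which is precisely this isomorphism — is a natural isomorphism. Applying $\Gamma_{\Sets^{op}}$ componentwise and invoking the canonical isomorphism $\overline{F}\,\overline{T}\,\Gamma_{\Sets^{op}}\cong\Gamma_{\Sets^{op}}\,FT$ coming from \cref{lem:knop_constr_functorial}(2), I would check that $\Gamma_{\Sets^{op}}(\eta)$ is a natural transformation $\id_{\mathcal{T}^0(\Sets^{op},\delta_t)}\Rightarrow\overline{F}\,\overline{T}$ exactly as in the proof of \cref{prop:adjoint}: commutativity with the generators $\Gamma(f)$ is naturality of $\eta$, while commutativity with the generators $\Gamma(f)^{\vee}$ reduces to the naturality square of $\eta$ being Cartesian in $\Sets^{op}$, which it is because any commutative square containing two parallel isomorphisms is a pullback. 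Each component $\Gamma_{\Sets^{op}}(\eta_Y)$ being an isomorphism, the natural transformation is an isomorphism, and it extends uniquely (still invertibly) to the additive Karoubi completion. I expect the only mildly fiddly point to be keeping the op-conventions straight and re-running the $(-)^{\vee}$-naturality step of \cref{prop:adjoint}; the content, $FT\cong\id$, is immediate. Note that the full adjunction does \emph{not} descend: the counit-square hypothesis of \cref{prop:adjoint} already fails for $G=\Z/2\Z$ (take the morphism of $G-\Sets^{op}$ dual to $\emptyset\hookrightarrow G$ and note that $(-)/G$ does not preserve the pushout of $\emptyset\leftarrow\emptyset\to G$), so it is only the unit, not the whole adjoint pair, that lifts to the Deligne--Knop categories.
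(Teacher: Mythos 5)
Your proposal is correct and follows essentially the same route as the paper: both deduce preservation of pullbacks and terminal objects from the adjoint triple $(-)^G\vdash T\vdash F$, verify the degree-function identities by the same two computations, and obtain $\Gamma_{\Sets^{op}}(\eta)$ from the observation that the unit of $T\vdash F$ is the canonical isomorphism $\id\xrightarrow{\sim}FT$, invoking the naturality argument of \cref{prop:adjoint}. Your extra details (the direct check that $(-)/G$ preserves injections, and the remark that isomorphism squares are Cartesian so the $\Gamma(f)^{\vee}$-naturality goes through) are correct elaborations of steps the paper leaves implicit.
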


\begin{proof} The functors $T:\Sets^{op}\leftrightarrows G-\Sets^{op}:F$ have left adjoints so they automatically preserve pullbacks and epimorphisms, and they clearly preserve the terminal object (the empty set) as well. Concerning the degree functions, let $e:X\hookrightarrow Y$ be a monomorphism in $\Sets$ and $e':X'\hookrightarrow Y'$ be a monomorphism in $G-\Sets$. Then $$\delta_{\phi_t}(T(e))=t^{\abs{Y\setminus X}} = \delta_t(e) \;\; \text{ and } \;\; \delta_t(F(e')) = t^{|(Y/G) \,  \setminus \, (X/G)|} = t^{|(Y\setminus X)/G|}  = \delta_{\phi_t}(e').$$

The composition $F\circ T$ is canonically isomorphic to the identity endofunctor of $\Sets^{op}$ and the unit of the adjunction $T\vdash F$ is the canonical isomorphism 
$\eta=\id:\id_{\Sets^{op}} \to F\circ T$. As in the proof of \cref{prop:adjoint}, this immediately implies the statement of the lemma.
\end{proof}

\begin{remark}
The composition $T\circ F$ is the endofunctor on $G-\Sets^{op}$ sending a $G$-set $X$ to the set $X/G$ with the trivial $G$-action, and the counit of the adjunction $T\vdash F$ is given by the obvious maps $\varepsilon_X:X\to X/G$ in $G-\Sets$, for every $G$-set $X$.

The conditions in the statement of \cref{prop:adjoint} do not hold: while the required commutative squares for the unit $\eta$ are obviously Cartesian, the commutative squares for the counit $\varepsilon$ might not be Cartesian if the corresponding map $f\in \Hom_{G-\Sets^{op}}(X, Y)$ is not a monomorphism (take for example $Y:=\emptyset$, $X$ a free transitive $G$-set, and $f: \emptyset\to X$ the trivial map).

In fact, it is easy to verify that $\Gamma_{G-\Sets^{op}}(\varepsilon)$ doesn't define a counit of adjunction for $(\overline{T}, \overline{F})$. 

\end{remark}

\subsection{Pullbacks between iterated wreath products}
Let $N\triangleleft G$ be a normal subgroup of $G$, and let $q: G\twoheadrightarrow G/N$ be the quotient homomorphism. Similarly to the discussion above, we have a triple of functors $$q^*:G/N-\Sets^{op}\to G-\Sets^{op}, \;\; (-)/N, (-)^N :G-\Sets^{op}\to G/N-\Sets^{op}$$ given by pullback with respect to $q$, taking $N$-orbits and $N$-invariants respectively. These functors are again adjoint: $(-)^N\vdash q^*\vdash (-)/N$. Hence  $q^*, (-)/N $ preserve pullbacks and terminal objects. 

As in \cref{ssec:functoriality_iterated_wreath}, the composition of the functors $(-)/N \circ q^*$ is canonically isomorphic to the identity functor.

Next, the functor $(-)/N$ defines a map $\kappa:Tran(G)\to Tran(G/N)$.
Let $\phi:Tran(G/N)\to \K$, and consider the degree functions $\delta_{\phi}$, $\delta_{\phi\circ \kappa}$ on $G/N-\Sets^{op}$ and $G-\Sets^{op}$ respectively.

\begin{lemma} The pair of adjoint functors $q^*, (-)/N$ give rise to two symmetric monoidal functors between the iterated wreath product categories:
$$\overline{(-)^N}\;:\;\mathcal{T}(G-\Sets^{op}, \delta_{\phi\circ \kappa})\;\; \leftrightarrows \;\;\mathcal{T}(G/N-\Sets^{op}, \delta_{\phi})\;:\;\overline{q^*}.$$
so that $\Gamma_{G/N-\Sets^{op}}(\eta) :  \id\xrightarrow{\sim} \overline{\overline{(-)^N}}\,\overline{q^*}$ where $\eta$ is the unit of the adjunction $q^*\vdash (-)^N$.
\end{lemma}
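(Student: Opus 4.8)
The plan is to verify the hypotheses of \cref{prop:adjoint} for the adjoint pair $q^* \vdash (-)/N$ between $G-\Sets^{op}$ and $G/N-\Sets^{op}$, namely that both functors preserve the degree functions and that the unit/counit squares are Cartesian, and then deduce the ``moreover'' statement about $\Gamma(\eta)$ being an isomorphism exactly as in the previous two lemmas.

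First I would check degree function compatibility. The functor $q^*$ sends a transitive $G/N$-set $S$ to a transitive $G$-set (the $G$-action on $q^*(S)$ is transitive since it is pulled back along a surjection), giving the map $\kappa: Tran(G)\to Tran(G/N)$ induced by $(-)/N$; one checks that $q^*$ matches $\delta_{\phi\circ\kappa}$ with $\delta_\phi$ by the computation that for a monomorphism $e:X\hookrightarrow Y$ in $G/N-\Sets$, the orbits of $Y\setminus X$ correspond (via pullback) to orbits of $q^*(Y)\setminus q^*(X)$ with matching $\kappa$-values. Dually, $(-)/N$ takes a monomorphism $e':X'\hookrightarrow Y'$ in $G-\Sets$ to the monomorphism $X'/N\hookrightarrow Y'/N$, and the $G/N$-orbits of $(Y'\setminus X')/N$ are exactly the images under $\kappa$ of the $G$-orbits of $Y'\setminus X'$, so $\delta_\phi((-)/N(e')) = \prod \phi(\kappa(\text{orbit})) = \delta_{\phi\circ\kappa}(e')$. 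These are routine but must be spelled out.

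Next I would handle the Cartesian square conditions. The key observation is that the composition $(-)/N \circ q^*$ is canonically isomorphic to the identity functor (as already noted in the text), so the unit $\eta: \id \Rightarrow q^* \circ (-)/N$ of the adjunction $q^*\vdash (-)/N$ — wait, here one must be careful about direction: in \cref{prop:adjoint} the notation is $F:\mathcal{A}\rightleftarrows\mathcal{A}':G$ with $G\vdash F$, so I take $\mathcal{A}=G/N-\Sets^{op}$, $\mathcal{A}'=G-\Sets^{op}$, $G=q^*$ (the left adjoint, playing the role of Knop's $G$), and $F=(-)/N$ (the right adjoint). Then $\varepsilon: q^*\circ((-)/N) \Rightarrow \id_{G-\Sets^{op}}$ and $\eta:\id_{G/N-\Sets^{op}}\Rightarrow ((-)/N)\circ q^*$; since $((-)/N)\circ q^*\cong\id$, the unit $\eta$ is a natural isomorphism, and hence its naturality squares are automatically Cartesian (isomorphism squares are always pullbacks). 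The only real work is the counit squares: for $X$ a $G$-set, $\varepsilon_X: q^*(X/N)\to X$ is (in $G-\Sets$) the quotient map $X\to X/N$ viewed in the opposite category, and one must check that for $f:X\to Y$ in $G-\Sets$, the square $q^*(X/N)\to X$, $q^*(Y/N)\to Y$ is Cartesian in $G-\Sets^{op}$, i.e. co-Cartesian (a pushout) in $G-\Sets$. I expect this to be the main obstacle — and as the Remark before \cref{ssec:functoriality_iterated_wreath} warns (taking $N=G$ recovers that situation), it likely \emph{fails} for general $f$. So I anticipate that the full hypotheses of \cref{prop:adjoint} do \emph{not} hold; only the weaker conclusion survives.

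Therefore the actual content of the lemma is just the weaker ``moreover'' clause, proved exactly as in \cref{ssec:functoriality_iterated_wreath}: since $(-)/N\circ q^*\cong\id$ canonically, the unit of the adjunction $q^*\vdash (-)/N$ is the canonical isomorphism $\eta=\id$, and applying $\Gamma_{G/N-\Sets^{op}}$ (which is functorial) yields that $\Gamma_{G/N-\Sets^{op}}(\eta):\id\xrightarrow{\sim}\overline{(-)/N}\circ\overline{q^*}$ is an isomorphism in $\mathcal{T}^0(G/N-\Sets^{op},\delta_\phi)$, hence also after Karoubi completion. Combined with \cref{lem:knop_constr_functorial}, which guarantees $\overline{q^*}$ and $\overline{(-)^N}$ (the authors' notation for $\overline{(-)/N}$) are well-defined symmetric monoidal functors once we have checked that $q^*$ and $(-)/N$ preserve pullbacks (automatic from adjointness, \cref{rmk:adjoint_pullbacks}), terminal objects (empty sets, clear), and the degree functions (Step 1 above), this completes the proof. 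The structure of the writeup will mirror the proof of the lemma in \cref{ssec:functoriality_iterated_wreath} almost verbatim, with $\Sets$ replaced by $G/N-\Sets$, $G-\Sets$ replaced by $G-\Sets$, orbit-counting replaced by the $\kappa$-twisted product over orbits, and $F$ replaced by $(-)/N$.
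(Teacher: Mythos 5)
Your proposal is correct and follows essentially the same route the paper intends, namely a verbatim transport of the proof of the lemma in \cref{ssec:functoriality_iterated_wreath}: check preservation of pullbacks (automatic since $q^*$ and $(-)/N$ have left adjoints), of terminal objects, and of the degree functions via the orbit/$\kappa$ computation, then note that the unit of $q^*\vdash (-)/N$ is a canonical isomorphism so that $\Gamma_{G/N-\Sets^{op}}(\eta)$ is one too. Your further observations --- that the counit squares need not be Cartesian, so \cref{prop:adjoint} does not apply in full and only the unit statement survives, and that $\overline{(-)^N}$ in the statement should be read as $\overline{(-)/N}$ --- agree with the paper's own remark following the lemma of \cref{ssec:functoriality_iterated_wreath}.
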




\end{document}